\documentclass[preprint,11pt,number]{elsarticle}

\usepackage{amssymb,titlesec}
\usepackage{amsmath}
\usepackage{amsthm}
\usepackage{epsfig, enumerate,amsbsy,mathrsfs,lineno,ifpdf,etoolbox,enumitem}
\usepackage[mathscr]{euscript}
\usepackage[colorlinks=true, allcolors=blue]{hyperref}
\usepackage{url,tabularx,booktabs,todonotes}
\usepackage{setspace, graphicx}
\usepackage[normalem]{ulem}
\usepackage[usenames,dvipsnames]{pstricks}
\usepackage[lined,ruled,linesnumbered]{algorithm2e}
\usepackage[margin=1.2in]{geometry}
\usepackage{mathtools}
\usepackage{caption}
\usepackage{subcaption}

\newcommand{\defeq}{\vcentcolon=}

\newtheorem{theorem}{Theorem}
\newtheorem{lemma}{Lemma}
\newtheorem{obs}{Observation}
\newtheorem{conj}{Conjecture}
\newtheorem{coro}{Corollary}
\newtheorem{claim}{Claim}

\numberwithin{equation}{section}

\titleformat{\subsection}
{\normalfont\normalsize\bfseries}
{\thesubsection.}{1em}{}

\makeatletter
\@addtoreset{case}{proof}
\makeatother

\let\oldenumerate\enumerate
\renewcommand{\enumerate}{
	\oldenumerate
	\setlength{\itemsep}{.5pt}
	\setlength{\parskip}{0pt}
	\setlength{\parsep}{0pt}
}
\numberwithin{equation}{section}

\newenvironment{claimproof}{%
	\begin{proof}%
	}{%
	\end{proof}%
}

\journal{Discrete Mathematics}

\begin{document}

\begin{frontmatter}

\title{Cop number of $P_k$-free graphs without a claw or a fork} 

\author[aff1]{Uttam K. Gupta}
\ead{ukumargpt@gmail.com}

\author[aff2]{Dinabandhu Pradhan}
\ead{dina@iitism.ac.in}

\author[aff2]{Rashmi Rekha Swain}
\ead{rashmirekhaswain2000@gmail.com}

\affiliation[aff1]{organization={Anugrah Memorial College Gaya Ji, Magadh University},
            city={Bodh Gaya},
            country={India}}

\affiliation[aff2]{organization={Department of Mathematics $\&$ Computing, IIT (ISM)},
            city={Dhanbad},
            country={India}}

\begin{abstract}
In 2019, Liu proved that the cop number of any $(P_k, claw)$-free graph is at most $k-3$. In 2026, Clow and Meger improved and generalized the result of Liu by showing that every connected $(P_k, E)$-free graph has cop number at most $\lceil \frac{k-1}{2} \rceil + 3$, where the graph $E$ is obtained by subdividing two edges of a claw. In this paper, we show that every $(P_k, claw)$-free graph $G$ has cop number at most $\lceil \frac{k-1}{2} \rceil$. This improves the result of Clow and Meger on claw-free graphs. We further show that this bound can be improved to $\lceil \frac{k}{3}\rceil$, when $G$ contains a hole of length at least $k-1$. We show that the cop number of any $(P_k, fork)$-free graph is at most $\lceil \frac{2(k-1)}{3} \rceil$. This also improves the result of Clow and Meger on fork-free graphs for some values of $k$. We further show that every $(P_7, fork)$-free graph has cop number at most $3$, and this bound is tight.
\end{abstract}	

\begin{keyword}
	Cops and robber; cop number; $(P_k, claw)$-free graphs; $(P_k, fork)$-free graphs.
\end{keyword}
 \end{frontmatter}

\section{Introduction}	
The classical version of the cops and robber game was introduced independently by Quilliot~\cite{Quilliot1978} and by Nowakowski and Winkler~\cite{NowakowskiWinkler1983}. 
This is a two-player pursuit-evasion game played on a finite, undirected, and simple graph $G$. The two opponent players are known as the cop player and the robber player. The cop player controls a set of cops, while the robber player controls a single robber. The game begins with the cop player placing all its cops on some vertices of the graph (multiple cops can be placed on a single vertex). Then by observing the positions of the cops, the robber player places the robber on a vertex of the graph. Then the players play their moves in alternating turns, starting with the cop player. In the cop player's turn, for each cop, the player may choose to move it to an adjacent vertex or stay it put on its current position. Similarly, the robber player plays on its turn. A \emph{round} consists of two consecutive turns in which the cops have the first turn and the robber has the next. The initial placement of the cops and the robber on the vertices of the graph is considered as the first round of this game. We say that a cop \emph{captures} the robber if at any turn, both of them lie on the same vertex of the graph $G$. If after a finite number of rounds, at least one of the cops captures the robber, then we say that the cop player wins; otherwise, the robber player wins. The minimum number of cops required to ensure that the cop player wins, irrespective of the strategy of the robber player, is called the \emph{cop number} of the graph $G$ and it is denoted by $c(G)$. Since the introduction of the  game, several variants of this game have appeared in the literature. These variants include playing with multiple robbers~\cite{BonatoEtAl2021, RicharbyandThilikos2009}, restrictions on the speed of cops and robber~\cite{ChalopinEtAl2011, FominEtAl2010}, visibility of the robber~\cite{ClarkeEtAl2020}, active and lazy versions of the game~\cite{BalEtAl2015}, and versions in which capture is allowed from a certain distance~\cite{BonatoEtAl2010}. This game is also played on directed graphs~\cite{FriezeEtAl2011, GoldsteinandReingold} and graphs with loops~\cite{Ellison2019}. For more information about the background and different versions of this game, we suggest the reader to see this book~\cite{Bonato2011}.

In the classical version of the game, the cop number of a disconnected graph is defined as the maximum of the cop numbers of each of its components. So it is natural to study the cop numbers of connected graphs only. In this paper, we consider the classical version of the game of cops and robber, and all graphs are connected, unless specified otherwise.

\begin{figure}
    \centering
\includegraphics[width=0.5\linewidth]{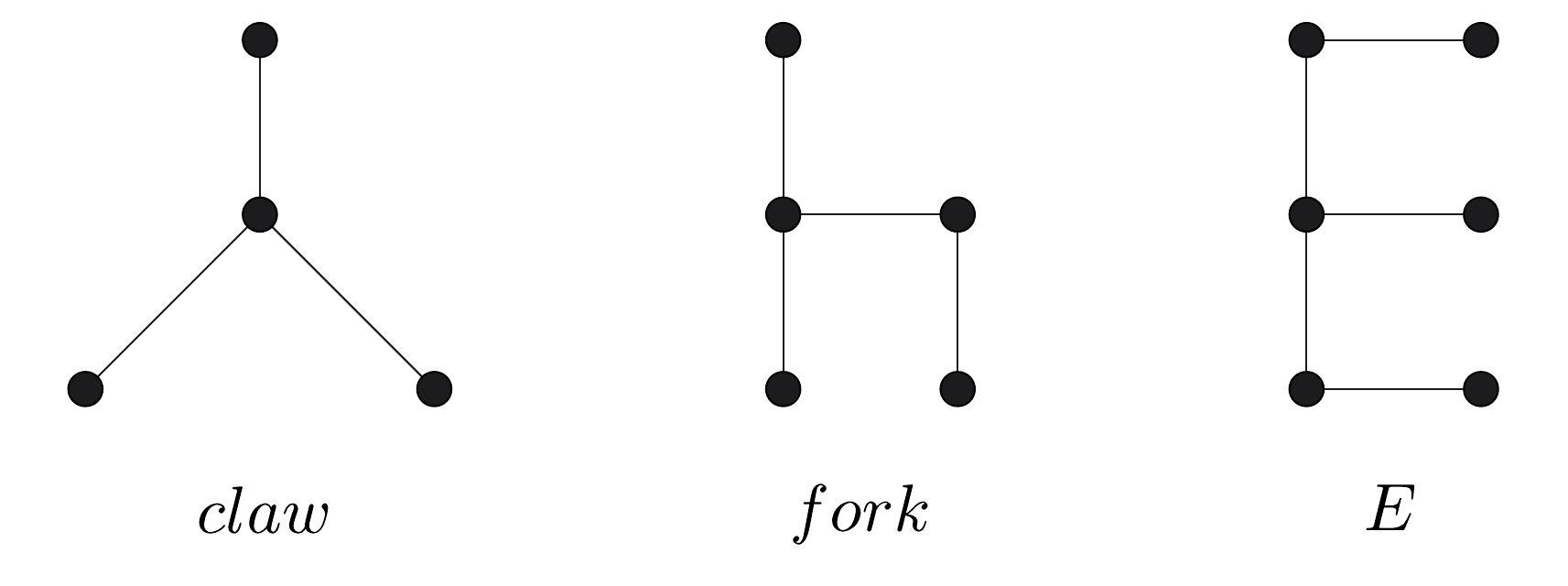}
   \caption{Some special graphs}
    \label{fig-clawforkandE}
\end{figure}
\subsection{Related work}
 Nowakowski and Winkler~\cite{NowakowskiWinkler1983}, and Quilliot~\cite{Quilliot1978} independently obtained characterization results for graphs with cop number~1. Later, Aigner and Fromme~\cite{AignerFromme1984} initiated the study of graphs with higher cop number and proved that all planar graphs have cop number at most $3$. In general, the cop number of a graph can be arbitrarily high~\cite{AignerFromme1984}. We say that a family $\mathcal{F}$ of graphs has bounded cop number if there exists a constant $C$ such that for every $G\in \mathcal{F}$, we have $c(G)\leq C$; otherwise, it is unbounded. Given a graph $H$, a graph $G$ is said to be $H$-free if $G$ contains no induced subgraph isomorphic to $H$. For a family $\mathcal{H}$ of graphs, $G$ is $\mathcal{H}$-free if it contains no graph in $\mathcal{H}$ as an induced subgraph. A key problem is to find which structural properties of a graph influence its cop number. So it is interesting to study the cop number of graph classes defined by forbidding induced subgraphs. Joret et al.~\cite{JoretEtAl2010} initiated the systematic study of the game of cops and robber on hereditary graph classes defined by forbidding some induced subgraphs. 
They showed that a class of $H$-free graphs has bounded cop number if and only if $H$ is a disjoint union of paths. They also showed that for $k\geq 3$, every $P_k$-free graph has cop number at most $k-2$. Sivaraman~\cite{Sivaraman2019} gave a shorter proof of this result and conjectured the following. 

\begin{conj}[\cite{Sivaraman2019}]\label{P4conj-Sivaraman}
For any $k\geq 5$, if $G$ is a $P_k$-free graph, then $c(G)\leq k-3$. 
\end{conj}
This conjecture has been verified for several subclasses of $P_6$-free graphs~\cite{GuptaEtAl2023, Turcotte2022}. Recently, Chudnovsky et al.~\cite{ChudnovskyEtAl2024} proved the conjecture for $P_5$-free graphs. For $k\geq 6$, this conjecture remains open. A claw is the complete bipartite graph $K_{1,3}$, a fork obtained by subdividing an edge of a claw, and $E$ is obtained by subdividing two edges of a claw (see Fig.~\ref{fig-clawforkandE}). By a result of Joret et al.~\cite{JoretEtAl2010}, the class of claw-free graphs has unbounded cop number. Then, the classes of fork-free graphs and $E$-free graphs also have unbounded cop numbers since these are superclasses of the class of claw-free graphs. So it is interesting to study the subclasses of these graph classes with bounded cop number. 
In the following table, we mention some of the known results related to our work.

\begin{table}[h]
\centering
\caption{Cop number of some hereditary graph classes}
\label{tab:pk-claw-bounds}
\small
\renewcommand{\arraystretch}{1.25}
\begin{tabularx}{\textwidth}{
    @{}
    >{\raggedright\arraybackslash}p{0.50\textwidth}
    >{\raggedright\arraybackslash}X
    >{\centering\arraybackslash}p{0.20\textwidth}
    >{\centering\arraybackslash}p{0.20\textwidth}
    @{}
}
\toprule
\textbf{Graph class}
&

\textbf{Upper bound on $c(G)$}
&
\textbf{Reference}
\\
\midrule

$P_k$-free graphs; $k\geq 3$
&

$k-2$
&
Joret et al.~\cite{JoretEtAl2010} and Sivaraman~\cite{Sivaraman2019}
\\ 

$(P_k,K_{1,3})$-free graphs; $k\geq 5$
&

$k-3$
&
Liu~\cite{Liu2019}
\\

Graphs without holes of length at least $k$; $k\geq 4$
&
$k-3$
&
Sivaraman~\cite{SivaramanLongHoles}
\\

$2K_2$-free graphs
&
$2$
&
Turcotte~\cite{Turcotte2022}
\\

$(2K_1\cup K_2)$-free graphs
&
$3$
&
Turcotte~\cite{Turcotte2022}
\\

$(claw, \textit{even-hole})$-free graphs
&
$2$
 &
Javadi and Momeni~\cite{JavadiMomeni2022}
\\

$(P_5,H)$-free graphs, where
$H\in\{\text{diamond},\text{paw},K_4,
2K_1\cup K_2,K_3\cup K_1\}$
&
$2$
 &
Gupta et al.~\cite{GuptaEtAl2023}
\\

$P_5$-free graphs
&
$2$
&
Chudnovsky et al.~\cite{ChudnovskyEtAl2024}
\\

$(P_k,E)$-free graphs; $k\geq 3$
&
$\left\lceil\frac{k-1}{2}\right\rceil+3$
&
Clow and Meger~\cite{ClowMeger2027}
\\

$(P_k, claw, butterfly, C_4, C_5)$-free graphs;  $k\geq 4$
&
$\left\lceil\frac{k-1}{3}\right\rceil+3$
 &
 Clow and Meger~\cite{ClowMeger2027}
\\

\bottomrule
\end{tabularx}
\end{table}

\subsection{Our Contributions}
Recently, Clow and Meger~\cite{ClowMeger2027} showed that every $(P_k, E)$-free graph has cop number at most $\lceil \frac{k-1}{2}\rceil + 3$. Note that $E$ contains fork, and fork contains claw as an induced subgraph. So the classes of $(P_k, claw)$-free graphs and $(P_k, fork)$-free graphs are subclasses of $(P_k, E)$-free graph class. In this paper, we strengthen their result for the classes of $(P_k,claw)$-free graphs and for some values of $k$, for $(P_k, fork)$-free graphs. 
\begin{theorem}\label{P4thm-Pkclawmain}
For any $k\geq 5$, if $G$ is a $(P_k,claw)$-free graph, then $c(G)\leq \lceil\frac{k-1}{2}\rceil$.
\end{theorem}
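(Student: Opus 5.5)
We propose a guarding strategy in the spirit of Joret et al.\ and Sivaraman's short proof for $P_k$-free graphs. There, one cop guards each vertex of an induced path, and the robber territory shrinks while the path grows. The new ingredient is claw-freeness, which we will use to make one cop guard two consecutive path vertices. We induct on the pair (current robber territory, length of a guarded induced path). Fix a vertex $v_1$ and put the robber in a component $R$ of $G - N[v_1]$. We maintain an induced path $Q=v_1v_2\cdots v_t$ in $G$ such that the robber lies in a component $R$ of $G-N[Q]$, and $v_t$ has a neighbor in $N(R)$. Since $G$ is $P_k$-free, once we extend $Q$ by a shortest path through $N(R)$ into $R$, we get $t\le k-1$ before the territory becomes empty.

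\textbf{Key step (pairing).} We will show that a single cop can guard $N(\{v_{2i-1},v_{2i}\})\cap N(R)$. This means that after finitely many rounds, the robber can never enter that set without being captured. The approach is to show that in a claw-free graph, for consecutive vertices $a=v_{2i-1}$ and $b=v_{2i}$ of the path, the relevant neighborhood structure is essentially a clique or a clique-like retract. Suppose $x\in N(a)\cap N(R)$ and $y\in N(a)\cap N(R)$ are nonadjacent. Then $a$ together with $x$, $y$ and the previous path vertex (or a vertex of $R$) tends to form a claw. We expect to derive that $N(a)\cap N(R)$ is a clique, and likewise $N(b)\cap N(R)$ is a clique. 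We then want a single vertex, or a shadowing strategy, that dominates both cliques in the sense of a retract. The standard tool is the lemma that if $H$ is an isometric/retract subgraph with $c(H)=1$, one cop can guard it. Two cliques joined through the edge $ab$ (with $a$ adjacent to all of the first and $b$ to all of the second) form a graph of diameter at most $3$ that is cop-win (a chordal-like structure). We will argue that its shortest-path distances are preserved in $G[V(R)\cup N(R)\cup Q]$, so the cop can shadow the robber's image on it, Aigner--Fromme style.

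\textbf{Iteration.} After the cop on a pair is in position, the robber is confined to a smaller component $R'$ of $R - N(\text{guarded set})$. We then extend the path through the last unguarded vertices, and repeat. The number of pairs used is at most $\lceil (k-1)/2\rceil$, since $Q$ has at most $k-1$ vertices. When $R$ becomes empty, the robber is captured. For odd leftover the last cop guards one vertex, which is the easy case.

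\textbf{Main obstacle.} The pairing step is the crux. Specifically, the claw argument requires a third neighbor of $a$ that is nonadjacent to $x$ and $y$. We will choose the path so that each $v_j$ has a private neighbor (namely $v_{j-1}$ or a vertex of $R$) nonadjacent to $N(R)$-vertices, and verify that the cliques remain so as $R$ shrinks. We also need isometry of the guarded structure. If the pair neighborhood is not exactly two cliques, we would fall back to showing it has a retract onto a path of length at most $2$ and guard that.
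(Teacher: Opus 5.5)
There is a genuine gap. Your argument rests entirely on the pairing step, and that step is not only unproved but false as you formulate it. Your claw argument for ``$N(a)\cap N(R)$ is a clique'' needs a neighbor of $a$ that is nonadjacent to both $x$ and $y$. No vertex of $R$ is adjacent to $a$ at all, since $R$ is a component of $G-N[Q]$. The path neighbors of $a$ can be adjacent to $x$ or $y$.

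For a concrete failure, take $Q=v_1v_2v_3$, a vertex $x$ adjacent to $v_1,v_2$, a vertex $y$ adjacent to $v_2,v_3$, and $r$ adjacent to $x,y$. This graph is claw-free and $P_6$-free, and $R=\{r\}$ is a component of $G-N[Q]$. Yet $N(v_2)\cap N(R)=\{x,y\}$ is not a clique.

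The isometry/retract part fails as well. Take $Q=v_1v_2v_3v_4$, with $x$ adjacent to $v_1,v_2$, $y$ adjacent to $v_3,v_4$, and $r$ adjacent to $x,y$; this graph is claw-free and $P_7$-free. For the pair $a=v_2$, $b=v_3$, the structure you want to guard is the path $xv_2v_3y$. In it $x,y$ are at distance $3$, but $d_G(x,y)=2$ through $r$. It is not a retract either, because $r$ would have to map to a vertex of $xv_2v_3y$ equal or adjacent to both $x$ and $y$, and none exists. So the Aigner--Fromme shadowing has nothing to act on. Your fallback (a retract onto a short path) is equally unverified. Also, ``one cop guards $N(\{a,b\})$'' is never actually established.

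The idea you are missing is much simpler and needs no shadowing. In a claw-free graph, if $xyz$ is an induced $P_3$, then $N[y]\subseteq N[x]\cup N[z]$: a vertex of $N(y)$ outside $N[x]\cup N[z]$ would form a claw with $y,x,z$. So do not give one cop to each \emph{consecutive} pair. Instead, put static cops on every \emph{other} vertex of the Gy\'arf\'as induced path.

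The paper walks a group of cops along $v_0v_1v_2\cdots$, leaving one cop behind on each even-indexed vertex. At every stage the whole closed neighborhood of the path built so far is cop-guarded: each odd $v_{2i+1}$ is covered by the cops on $v_{2i}$ and $v_{2i+2}$, and the current endpoint is covered by the moving group. Hence $p=\lceil\frac{k-1}{2}\rceil$ cops guard $N[\{v_0,\dots,v_{2p-2}\}]$. If the robber still survives, extend by $v_{2p-1}$ and a vertex $w$ on the robber's side. This gives an induced path on $2p+1\ge k$ vertices, a contradiction.
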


Since a cycle on 4 or 5 vertices is $P_5$-free and has cop number exactly $2$, the bound obtained in Theorem~\ref{P4thm-Pkclawmain} is tight for the case $k=5$. A $\emph{hole}$ in a graph $G$ is an induced cycle in $G$ of length at least $4$. Note that a $P_k$-free graph does not contain any hole of length greater than $k$. We show that the bound obtained in Theorem~\ref{P4thm-Pkclawmain} can be further improved if the graph $G$ contains sufficiently long holes. In particular, we prove the following. 

\begin{theorem}\label{P4thm-Pkclawcontainingk-1hole}
For any $k\geq 5$, if $G$ is a $(P_k, claw )$-free graph containing a hole of length $t\geq k-1$, then $c(G)\leq \lceil\frac{t}{3}\rceil$.
\end{theorem}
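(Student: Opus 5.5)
Let $C = v_0v_1\cdots v_{t-1}v_0$ be a hole of length $t\ge k-1$ in $G$. Since $G$ is $P_k$-free, $t\le k$, so $t\in\{k-1,k\}$. The plan is a domination argument: show that every vertex of $G$ lies on $C$ or has a neighbour on $C$. Then guard $C$ with $\lceil t/3\rceil$ cops so that the robber can never enter $N[C]=V(G)$. This is a contradiction, since the robber must be somewhere.

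\textbf{Step 1 (domination).} Suppose some vertex is at distance exactly $2$ from $C$. Then there is a path $xy$ with $y\notin C$, $y$ adjacent to $C$, and $x$ not adjacent to $C$. Claw-freeness forces the neighbourhood of $y$ on $C$ to be small and consecutive. If $y$ had two nonadjacent neighbours $v_i,v_j$ on $C$, then $\{y;x,v_i,v_j\}$ would be a claw, because $x$ is adjacent to neither $v_i$ nor $v_j$. So $N(y)\cap V(C)$ is a clique on $C$: a single vertex $v_i$ or an edge $v_iv_{i+1}$.
\begin{itemize}
\item If $N(y)\cap V(C)=\{v_i\}$, the path $x,y,v_i,v_{i+1},\dots,v_{i+t-2}$ is induced and has $t+1\ge k$ vertices. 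It therefore contains an induced $P_k$, a contradiction.
\item If $N(y)\cap V(C)=\{v_i,v_{i+1}\}$, the path $x,y,v_{i+1},v_{i+2},\dots,v_{i+t-1}$ is induced: $y$ is nonadjacent to $v_{i+2},\dots,v_{i-1}$, and $x$ sees nothing on $C$. It has $t+1\ge k$ vertices, again a contradiction.
\end{itemize}
Hence $V(G)=N[V(C)]$. If the length count falls one short in a boundary case ($t=k-1$ with a shorter path), one can lengthen the path using $x$'s further neighbours. I expect this to be unnecessary, since $t+1\ge k$ already.

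\textbf{Step 2 (guarding).} Partition $C$ into $\lceil t/3\rceil$ consecutive segments of at most three vertices, $S_j=\{v_{3j},v_{3j+1},v_{3j+2}\}$. Assign one cop to each segment. The goal is for each cop, after a finite setup phase, to guard $N[S_j]$: whenever the robber steps onto a vertex of $N[S_j]$, the cop captures it. A single cop guarding a geodesic path is the classical Aigner–Fromme shadow strategy. But $S_j$ need not be a geodesic of $G$ (it is a segment of a hole, and holes can have chords through outside vertices), and guarding the closed neighbourhood is more than guarding the path. So I would instead use a direct argument. Each segment is a path $P_3$ with centre $v_{3j+1}$. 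The aim is to show that, in a claw-free graph, $N[S_j]$ is covered by two cliques through the centre, or something close to this, so that a cop moving on $S_j$ can mirror the robber's "projection". Concretely, the cop shadows the robber's projection onto $C$: if the robber is at $r$ with neighbour $v_i\in S_j$, the cop moves to maintain being at or adjacent to the appropriate position.

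\textbf{Main obstacle.} The hardest step is this guarding argument. The proposal as stated is not yet a strategy: robber vertices in $N(S_j)$ may also be adjacent to other segments' vertices, and the robber can move between neighbourhoods of different segments. The fallback is a two-phase strategy. In the first phase, the cops move onto $C$, which is possible since $G$ is connected. In the second phase, they use the cyclic structure: the $\lceil t/3\rceil$ cops sit on $C$ so that every vertex of $C$ is within distance $1$ of a cop. Thus the cops dominate $C$ while rotating in step with the robber's projection. Claw-freeness bounds the projection of $N[r]$ to an arc of length at most about $4$, which lets cops spaced three apart respond. Verifying that this projection moves by at most one "arc step" per robber move, and that the cops can keep pace, is where I expect the technical work to lie.
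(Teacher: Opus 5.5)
Your Step 1 is correct and matches the paper's Lemma~\ref{P4lem-pkclawstructure}. The claw $\{y;x,v_i,v_j\}$ forces $N_C(y)$ to be a single vertex or an edge. Then $x,y$ followed by $t-1$ consecutive hole vertices is an induced path on $t+1\ge k$ vertices, so no boundary repair is needed.

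The gap is Step 2. As you acknowledge, it is not a strategy, and the guarding route you sketch has no visible way to close. Cops dominating $C$ only make $C$ cop-guarded; the robber lives in $N(V(C))$ and can stay there. Nor can one cop per three-vertex segment be expected to guard $N[S_j]$:
\begin{enumerate}
\item The Aigner--Fromme shadow strategy protects an isometric path, not its neighbourhood.
\item The natural claw-free covering tool, Observation~\ref{P4obs-copguardedpath}, needs cops on \emph{both} ends of an induced $P_3$.
\item Your hoped-for cover by two cliques through the centre fails in general. A vertex neighbourhood in a claw-free graph only has independence number at most $2$; it can be a $C_5$, as in the $5$-wheel.
\end{enumerate}
Likewise, ``rotating with the projection'' has no termination argument.

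The missing idea is that no perpetual guarding is needed; the paper captures the robber within five rounds. The steps are as follows.
\begin{enumerate}
\item Place cops on $v_1,v_4,v_7,\dots$. By Observation~\ref{P4obs-copguardedpath}, the robber's start $z$ meets each unoccupied component of $C$ in $0$ or $2$ vertices.
\item If $z$ meets two such components, say in edges $v_iv_{i+1}$ and $v_jv_{j+1}$, then claw-freeness gives $N[z]\subseteq N[v_{i+1}]\cup N[v_{j+1}]$, and two cops step onto those vertices. Hence $N_C(z)=\{v_i,v_{i+1}\}$.
\item The two flanking cops step onto $v_i$ and $v_{i+1}$. This forces the robber to some $z_1\in N(z)$, which lies in $N(V(C))$ and attaches to $C$ along an edge $v_jv_{j+1}$ avoiding $v_i,v_{i+1}$.
\item This is the decisive step. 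Since $zv_j\notin E(G)$ and $z_1$ is adjacent to both, claw-freeness gives $N[z_1]\subseteq N[z]\cup N[v_j]$. So one cop leaves the cycle and moves onto the robber's \emph{previous} vertex $z$ (it is adjacent to $z$). Another cop adjacent to $v_j$ moves onto $v_j$, and the robber is trapped.
\end{enumerate}
For $t\ge 7$ such a second cop exists, because after the shift at most three consecutive cycle vertices are unoccupied. For $t\in\{4,5,6\}$ there are only two cops, and the paper handles these by a separate short case analysis using the same covering trick, with one extra chasing round when $t=6$. This ``occupy the robber's old vertex plus one cycle vertex'' move, rather than any shadowing on $C$, is what your proposal lacks.
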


In 2019, Sivaraman~\cite{SivaramanLongHoles} proved the following.
\begin{theorem}[\cite{SivaramanLongHoles}]\label{P4thm-sivaraman}
If $G$ does not contain any hole of length at least $t\geq 4$, then $c(G)\leq t-3$.
\end{theorem}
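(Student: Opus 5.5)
This statement is Sivaraman's theorem, cited from \cite{SivaramanLongHoles}; the paper uses it without proof. Still, here is how I would prove it. The idea is to generalise the Joret et al.\ strategy for $P_k$-free graphs, in which cops occupy an induced path and the robber's territory shrinks.

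I would maintain the following invariant. The $m$ cops occupy a shortest path $Q=q_1q_2\cdots q_m$, with $m\le t-3$. The robber lies in a component $R$ of $G-N[Q]$. Every vertex of $Q$ is relevant, in the sense that $q_m$ has a neighbour in $N(R)$. Such a vertex of $N(R)$ is also adjacent to some vertex of $R$, since it lies in $N(R)$.

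In each phase I extend the path by one vertex and strictly shrink the robber territory. Pick $x\in N(q_m)\cap N(R)$. Use a spare cop to walk along $Q$ and out to $x$, so that the path becomes $q_1\cdots q_m x$ and the robber's component shrinks to a component $R'\subseteq R\setminus N[x]$. Cops that are no longer needed are dropped from the front of $Q$. Concretely, if $q_1$ has no neighbour in $N(R')$, that cop can be freed, because the robber can never reach $N(q_1)$ without first passing through the neighbourhood of a later guard.

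The main point is to bound the length of the guarding path by $t-3$ cops. Suppose all $m$ path vertices are needed, so each $q_i$ has a neighbour $y_i\in N(R')$. Then I would build a hole. Take $q_1$ and $q_m$, their respective neighbours $y_1,y_m$ on the robber side, and a shortest path through $R'$ connecting $y_1$ and $y_m$. Together with $Q$ this closes a cycle. Choosing everything minimally (a shortest $Q$, and $q_1,q_m$ the extreme needed indices) should make the cycle induced, and its length is at least $m+3$. The hypothesis that there is no hole of length $\ge t$ then forces $m+3<t$, hence $m\le t-4$. So one extra cop suffices for the extension step, for a total of $t-3$.

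The main obstacle is guaranteeing the cycle is induced. Chords can come from two sources:
\begin{itemize}
\item a vertex $y_i$ may be adjacent to several $q_j$;
\item $Q$ may not remain a shortest path after the extensions.
\end{itemize}
I would handle this by trimming $Q$ to the segment between the extreme attachment points of a single connecting path, and by arguing that the vertices no longer in use can be released. Termination follows because $R$ strictly shrinks in each phase, so eventually the robber is captured.
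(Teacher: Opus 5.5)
Your architecture is the right one: cops on an induced path grown Gyárfás-style, a shrinking robber component, and a hole argument bounding how much of the path must stay guarded. The paper only cites this theorem, but this is the standard route. However, the step that carries all the weight is false as you state it. If every vertex of $Q$ has a neighbour in $N(R')$, there need not be a hole of length at least $m+3$. So you cannot conclude $m\le t-4$, and then there is no spare cop for the extension.

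Take $t=5$ and the graph on $q_1,q_2,y_1,y_2,d$ with edges $q_1q_2,q_1y_1,q_2y_2,y_1y_2,y_1d,y_2d$. Your procedure reaches this state if all cops start on $q_1$, the robber is on $d$, and the first extension is to $x=q_2$. Then $R'=\{d\}$, and each $q_i$ is genuinely needed, since the only path-neighbour of $y_i$ is $q_i$. Yet the only hole is the $4$-cycle $q_1q_2y_2y_1$, because the $5$-cycle through $d$ has the chord $y_1y_2$. So both of the $t-3=2$ cops are pinned to the path, and your scheme has no cop left to extend it.

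The chords you flag as the main obstacle are exactly where the argument breaks. Trimming $Q$ and releasing unused vertices cannot repair this: a cop may be released only if every vertex of $N(R')$ stays guarded, not just the endpoints of one connecting path. Your release test is also the wrong one. A vertex of $N(R')$ adjacent to both $q_1$ and $q_m$ keeps $q_1$ alive for no reason.

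Two ideas are missing. Write the path as $v_1\cdots v_m$ and the robber's component of $G-N[\{v_1,\dots,v_m\}]$ as $D$.

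First, always take the new head $y$ inside the previous robber component (the Gyárfás choice). Then $y$ is adjacent to $v_m$ but to no earlier path vertex, and the path stays induced without any shortest-path hypothesis.

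Second, for each $x\in N(D)$, look at its \emph{last} neighbour $v_J$ on the path, not its first. If $x\notin N[y]$, then $x v_J v_{J+1}\cdots v_m y$, closed by a shortest $y$--$x$ path with interior in $D$, is an induced cycle of length at least $m-J+4$. With no hole of length at least $t$, this forces $J\ge m-t+5$. Hence $N(D)\subseteq N[\{v_{m-t+5},\dots,v_m,y\}]$.

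So the $t-3$ cops on the last $t-3$ path vertices can all step forward at once, with the head cop moving onto $y$. The boundary of $D$ stays guarded throughout, and no spare cop is ever needed. The case $xy\in E(G)$, which is exactly what defeats your $m+3$ count, is absorbed by the new head. Termination then follows as you say, since $D$ strictly shrinks.
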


Using Theorem~\ref{P4thm-Pkclawcontainingk-1hole} and Theorem~\ref{P4thm-sivaraman}, we obtain the following corollary.

 \begin{coro}
If $G$ is a $(P_6, claw)$-free graph, then $c(G)\leq 2$.
\end{coro}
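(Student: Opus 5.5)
The corollary follows from a two-case split on whether $G$ contains a long hole. We may assume $G$ is connected, since the cop number of a disconnected graph is the maximum over its components. Each component is again $(P_6,claw)$-free.

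\textbf{Case 1: $G$ contains a hole of length $t\geq 5$.} Since $G$ is $P_6$-free, every hole has length at most $6$: an induced cycle on $7$ or more vertices contains an induced $P_6$. So $t\in\{5,6\}$. Apply Theorem~\ref{P4thm-Pkclawcontainingk-1hole} with $k=6$. The hypothesis $t\geq k-1=5$ holds, so $c(G)\leq \lceil t/3\rceil$. Both $\lceil 5/3\rceil$ and $\lceil 6/3\rceil$ equal $2$, hence $c(G)\leq 2$.

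\textbf{Case 2: $G$ contains no hole of length at least $5$.} Apply Theorem~\ref{P4thm-sivaraman} with $t=5$, which gives $c(G)\leq 5-3=2$.

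The two cases are exhaustive, so $c(G)\leq 2$ in all cases. No step is a real obstacle. The only point needing care is matching parameters: Theorem~\ref{P4thm-Pkclawcontainingk-1hole} is applied with $k=6$ (its requirement $k\geq 5$ holds) and Theorem~\ref{P4thm-sivaraman} with $t=5$ (its requirement $t\geq 4$ holds). The remark that holes have length at most $6$ only confirms that the ceiling bound is $2$; in fact any hole length $t\in\{5,6\}$ gives $\lceil t/3\rceil=2$.
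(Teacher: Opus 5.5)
Your proof is correct and is the argument the paper intends: you split on whether $G$ has a hole of length at least $5=k-1$. In the first case you apply Theorem~\ref{P4thm-Pkclawcontainingk-1hole} with $k=6$, which gives $\lceil t/3\rceil=2$ for $t\in\{5,6\}$, and in the second you apply Theorem~\ref{P4thm-sivaraman} with $t=5$.
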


Note that a cycle on 4 or 5 vertices is a tight example of above corollary. Next, we obtain results for the $(P_k, fork)$-free graphs.

\begin{theorem}\label{P4thm-Pkforkmain}
For any $k\geq 5$, if $G$ is a $(P_k,fork)$-free graph, then $c(G)\leq \lceil\frac{2(k-1)}{3}\rceil$.   \end{theorem}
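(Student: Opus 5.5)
We follow the standard guarding-by-shortest-paths approach used for $P_k$-free graphs (Joret et al., Sivaraman, Liu), with the gain coming from fork-freeness. Fix a vertex $v$ and consider the BFS layers $L_0=\{v\},L_1,\dots$ from $v$. Since $G$ is $P_k$-free, an induced path starting at $v$ has fewer than $k$ vertices, so the eccentricity of $v$ is at most $k-2$. The basic engine is the fact (Aigner--Fromme) that one cop can guard an isometric path. Our strategy has two parts. First, place cops so that, after a finite number of rounds, the robber is confined to a component of $G$ minus the guarded region. Second, inductively push the robber to a region in which every induced path available to it is shorter, continuing until it is caught. Each phase should lengthen the guarded induced path by three vertices while costing only two cops. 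This gives the count $\lceil 2(k-1)/3\rceil$, compared with the one-cop-per-two-vertices accounting that would give $\lceil (k-1)/2\rceil$ in the claw-free case.

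Concretely, we maintain an induced path $P=p_1p_2\cdots p_m$ and a component $C$ of $G-N[P]$ containing the robber, with the cops guarding $N[P]$ against entry into $C$ in the sense that any robber step into $N(C)\cap N[P]$ is punished. Since $P_k$-freeness forbids extending $P$ inside $C$ indefinitely, we get $m\le k-1$. The inductive step picks a vertex $x\in N(C)$ adjacent to the end $p_m$ together with a neighbour $y\in C$, extends the path, and redistributes the cops.

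Fork-freeness enters as follows. Take $u\in N(P)$ adjacent to $p_m$ with a neighbour in $C$. Then $u$ cannot be adjacent to both $p_{m-1}$ and a vertex far along $P$ while also having a private neighbour in $C$ and a neighbour off the path: the vertices $u,p_m,p_{m-1}$, a vertex of $C$, and one more vertex would induce a fork (a claw with one subdivided edge). Hence the attachment vertices of $C$ see only a window of at most three consecutive vertices of $P$. That window is what lets two cops, placed roughly on $p_{m-1}$ and on a shortest-path shadow in $N(C)$, guard the last three path vertices together with the attachments of $C$ to them. We would formalise this as a lemma: if $C$ is a component of $G-N[P]$, then the set of vertices of $P$ having a neighbour in $N(C)$ lies within a subpath of $P$ on three vertices. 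With this, the cops guarding earlier segments can be released as the robber's region shrinks, and we recurse on $G[C\cup N(C)]$ with a new path starting at the attachment.

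The main obstacle is this key lemma together with making the guarding rigorous. A single vertex of $P$ need not be a cut vertex, so we must show that two cops actually prevent the robber from leaving $C$ through the window. We would first try the Aigner--Fromme shadow strategy on a shortest path through the window: such a path has at most three vertices, so one cop can guard it once in position and the second cop handles the transition. Boundary cases, namely $m\le 2$ and $k-1$ not divisible by $3$, would be handled by direct inspection, and these account for the ceiling in the bound.
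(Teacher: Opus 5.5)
Your outer framework is the right one: grow an induced path $P$, keep the robber in a component of $G-N[P]$, and pay two cops per three path vertices. The mechanism you propose for it has a genuine gap, though: the ``window'' lemma is false, and the cop-release scheme built on it cannot work.

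Take $G=C_n$ with vertices $c_0,\dots,c_{n-1}$. It is claw-free (hence fork-free) and $P_n$-free. For $P=c_0c_1\cdots c_{m-1}$ with $4\le m\le n-3$, the set $C=\{c_{m+1},\dots,c_{n-2}\}$ is a component of $G-N[P]$ with $N(C)=\{c_m,c_{n-1}\}$. So both ends $c_0$ and $c_{m-1}$ of $P$ have neighbours in $N(C)$, and no three-vertex subpath contains both.

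This $P$ is exactly what the nested-component construction produces from $v_0=c_0$, so restricting to such paths does not help. The example also shows that the robber's region can re-attach next to the \emph{first} path vertex, so cops on early segments can never be released. Indeed, your count $\lceil 2(k-1)/3\rceil$ only makes sense if every cop placed stays for the rest of the game.

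Your fork argument is not valid as written either. If $u$ is adjacent to both $p_m$ and $p_{m-1}$, then $u,p_m,p_{m-1}$ form a triangle and cannot lie in an induced fork. Finally, the Aigner--Fromme shadow strategy guards the vertex set of an isometric path after a set-up phase, not a closed neighbourhood, so it does not give the invariant you need.

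The missing idea is a purely local guarding fact. If $xyzw$ is an induced $P_4$ in a fork-free graph, then $N[z]\subseteq N[x]\cup N[y]\cup N[w]$, because any $u\in N(z)$ outside this union makes $\{x,y,z,w,u\}$ a fork. Hence cops on $x,y,w$ also guard $N[z]$.

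With this, the Gy\'arf\'as-path strategy goes through directly:
\begin{enumerate}
\item All cops start on $v_0$.
\item With the robber in a component $D_j$ of $D_{j-1}-N[v_{j-1}]$, the moving group steps to some $v_j\in N_{D_{j-1}}(v_{j-1})$ having a neighbour in $D_j$.
\item Whenever the group advances past $v_j$, it leaves one cop there unless $j\equiv 2\pmod 3$.
\end{enumerate}
Every $N[v_j]$ stays cop-guarded; a skipped $v_j$ is covered via the induced $P_4$ $v_{j-2}v_{j-1}v_jv_{j+1}$. So the robber is confined to $D_j$, and $v_0v_1\cdots$ is induced.

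With $p=\lceil 2(k-1)/3\rceil$ cops the group reaches $v_{q-2}$ for $q=\lceil 3p/2\rceil\ge k-1$. One further vertex $v_{q-1}$ plus a neighbour $w\in D_{q-1}$ gives an induced path on $q+1\ge k$ vertices, a contradiction. No window lemma, isometric-path guarding, or recursion on $G[C\cup N(C)]$ is needed.
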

The upper bond obtained in Theorem~\ref{P4thm-Pkforkmain} improves the bound obtained by Clow and Meger~\cite{ClowMeger2027} for $k < 15$ and $k=16$. For $k=15$ and $17\leq k\leq 20$, these two bounds give the same value. For a particular case $k=7$, we obtain even a stronger result. 
 
\begin{theorem}\label{P4thm-Pkforkwithlongholes}
If $G$ is $(P_7,fork)$-free graph, then $c(G)\leq 3$.
\end{theorem}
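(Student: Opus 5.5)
We prove Theorem~\ref{P4thm-Pkforkwithlongholes} by splitting on the length of the longest hole of $G$. A $P_7$-free graph has no hole of length greater than $7$. If $G$ has no hole of length at least $6$, then Theorem~\ref{P4thm-sivaraman} with $t=6$ gives $c(G)\leq 3$ and we are done. So we may assume that $G$ contains a hole $C$ of length $6$ or $7$, and the real work is to show that three cops suffice in this case. The idea is to exploit the strong structure that fork-freeness and $P_7$-freeness force around a long hole.

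\textbf{Step 1: structure around the hole.} Write $C=c_0c_1\cdots c_{t-1}c_0$ with $t\in\{6,7\}$. We first show that $C$ is dominating, i.e.\ every vertex of $G$ is within distance $1$ of $C$.
\begin{itemize}
\item Suppose a vertex $x$ is at distance $2$ from $C$, with neighbour $y$ adjacent to $C$.
\item If $y$ has a single neighbour $c_i$ on $C$, then $x y c_i c_{i+1}\cdots$ contains an induced $P_7$ once $t\geq 6$.
\item If $y$ has few neighbours on $C$, then $x,y$ together with a suitable induced subpath of $C$ give an induced fork centred at a neighbour of $y$ on $C$ (or at $y$ itself).
\item If $y$ has many neighbours on $C$, then $y$ and three pairwise nonadjacent vertices of $C$ form a claw; extending one leaf along $C$ gives a fork.
\end{itemize}
The case analysis on $N(y)\cap C$ is finite and routine.

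Next we classify vertices outside $C$ by their neighbourhood on $C$. Fork-freeness and $P_7$-freeness should force each such neighbourhood to be a set of consecutive vertices (an interval) of bounded type, or possibly all of $C$. As a result, the vertices of $G$ fall into a bounded number of ``zones'' $Z_0,\dots,Z_{t-1}$, where the vertices of $Z_i$ attach around $c_i$.

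\textbf{Step 2: the cop strategy.}
\begin{enumerate}
\item Station one cop on a vertex that guards a large portion of the graph, for example a vertex dominating one side of the structure. If no such vertex exists, use the standard technique of guarding a shortest (isometric) path, which one cop can do after finitely many moves (Aigner--Fromme).
\item Split $C$ into two geodesic paths between antipodal vertices. Each has length at most $3$, so one cop per path can guard it.
\item Once these paths are guarded, the robber cannot step onto $C$. By Step 1, its territory is then confined to the neighbourhood of $C$, which lies in $N(P_1\cup P_2)$, or to a single zone.
\item The third cop then hunts the robber inside this confined region. Here we use that the region, being fork-free and $P_7$-free with the hole no longer usable, has a cop-win structure: it is dominated by a clique, or a vertex dominates the robber's component.
\end{enumerate}

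\textbf{Main obstacle.} The hardest step is making the last part of Step 2 rigorous. Guarding the two geodesics does \emph{not} by itself confine the robber, because adjacency to $C$ does not mean adjacency to a guarded vertex the robber can safely avoid. What we need is that, after guarding, every robber territory $R$, meaning a component of $G-N[P_1\cup P_2]$, is cop-win.

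To prove this, we would first aim to show that any induced $P_4$ or hole inside $R$ combines with $C$ to give a fork or an induced $P_7$. That would force $R$ to be chordal and $P_4$-free, so $R$ has a dominating vertex, and the third cop wins in one move.

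If this fails, the fallback is Sivaraman's argument for $P_k$-free graphs. We would apply it inside $R$, using the two guarding cops as the ``already used'' cops, so that the $P_7$ bound is reduced by the length of the guarded structure.
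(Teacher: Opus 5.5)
Your reduction to a hole $C$ of length $t\in\{6,7\}$ via Theorem~\ref{P4thm-sivaraman} matches the paper. However, Step~1 is false, and it fails exactly in the case that matters. $C$ need not be dominating. Take $C=c_0c_1\cdots c_5c_0$, add a vertex $y$ adjacent to all of $C$, and add a vertex $x$ adjacent only to $y$. Then $y$ is universal, while neither a fork nor $P_7$ has a dominating vertex. So any induced fork or $P_7$ would lie in $G-y$, which is $C_6$ plus an isolated vertex. Hence $G$ is $(P_7,\mathrm{fork})$-free, yet $x$ is at distance $2$ from $C$. Your ``many neighbours'' bullet breaks here because extending a leaf of a claw at $y$ along $C$ always lands on a neighbour of $y$. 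The interval classification fails too: on a $6$-hole, a vertex adjacent to exactly $c_1,c_3,c_5$ creates no fork. The correct structural fact is Lemma~\ref{P4lem-forkfreestr}: every vertex outside $N[V(C)]$ has a neighbour complete to $C$. To see this, suppose $y\in N(V(C))$ has a neighbour $x\notin N[V(C)]$ and misses a vertex of $C$. Fork-freeness forces $N_C(y)=\{c_i,c_{i+1}\}$, and then $xyc_{i+1}c_{i+2}\cdots c_{i-1}$ is an induced $P_{t+1}$. No vertex is at distance $3$: a shortest path $z_0z_1z_2$ leaving $N(V(C))$ would have $z_0$ complete to $C$, and then $z_0,z_1,z_2$ with two nonadjacent vertices of $C$ induce a fork.

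Step~2 is not yet a strategy. Guarding the two halves of $C$ in the Aigner--Fromme sense needs them to be isometric in $G$, which fails in general; in the example above, $c_0$ and $c_3$ are at distance $2$ through $y$. It is also unclear how items 1--4 fit into three cops. The decisive claim, that every robber territory is cop-win, is only something you ``would aim to show'', with an unspecified fallback. None of this machinery is needed; the missing idea is a one-shot placement. Put the cops on $c_0,c_2,c_4$. If the robber starts at $z\in N(V(C))$, fork-freeness (plus $P_7$-freeness when $t=7$) forces $z$ to be complete to $V(C)\setminus\{c_0,c_2,c_4\}$. The cops then move to $c_1,c_3,c_4$. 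Since $c_4c_3zc_1$ is an induced $P_4$, Observation~\ref{P4obs-Pkforkobs} gives $N[z]\subseteq N[c_1]\cup N[c_3]\cup N[c_4]$, so the robber is caught. If instead $z\notin N[V(C)]$, it has a neighbour $y$ complete to $C$, and the cop on $c_0$ steps to $y$. Then $N[z]\subseteq N[y]\cup N[c_2]\cup N[c_4]$, since a vertex of $N(z)$ outside this union would form a fork with $z,y,c_2,c_4$. Either way, capture occurs within three rounds.
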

\begin{figure}[t]
    \centering
\includegraphics[width=0.6\linewidth]{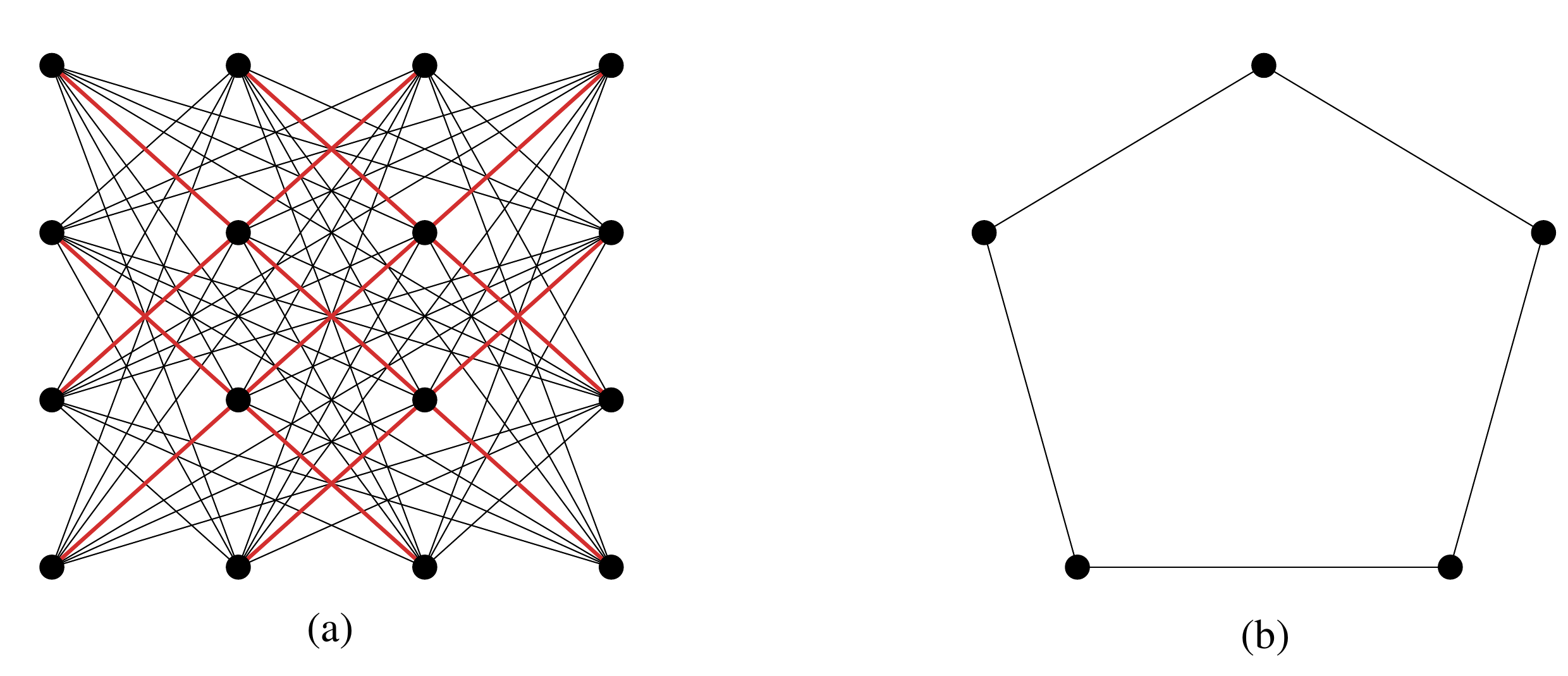}
    \caption{(a) The complement of $4\times 4$ Rook's graph. The bold red lines containing more than $2$ vertices represent that all the vertices in the line are pairwise adjacent. (b) $C_5$.}
    \label{P4fig-RooksandC5}
\end{figure}
The complement of the $m \times n$ Rook's graph~\cite{comprooksgraph} has vertex set $\{(i,j)\mid 1\leq i\leq m, 1\leq j\leq n\}$, and two distinct vertices $(i,j)$ and $(i',j')$ are adjacent if and only if $i \neq i'$ and $j \neq j'$. Note that the complement of $4\times 4$ Rook's graph (see Fig.~\ref{P4fig-RooksandC5} (a)) does not contain $K_2\cup 2K_1$ as an induced subgraph, and hence it is a $(P_6, fork)$-free graph. Since this graph has cop number exactly $3$~\cite{Turcotte2022}, the upper bound obtained in Theorem~\ref{P4thm-Pkforkwithlongholes} is tight.

\subsection{Terminology}
We conclude this section with some basic definitions and notations used throughout this paper. For terms not defined here, we refer the reader to the book~\cite{BondyandMurthybook}. Let $G$ be a finite, simple, and undirected graph with vertex set $V(G)$ and edge set $E(G)$. Let $v\in V(G)$ be a vertex and $H$ be a subgraph (not necessarily proper) of $G$. We define $N_{H} (v) = \{x\in V(H)\mid xv\in E(G)\}$ as the \emph{(open) neighborhood of $v$} in $H$ and $N_{H} [v]= N_H(v)\cup \{v\}$ as the \emph{closed neighborhood of $v$} in $H$. Similarly, for a set $S\subseteq V(G)$, we define $N_{G}(S)=\{x\in V(G)\setminus S \mid N_{G}(x)\cap S \neq \emptyset\}$ as the \emph{(open) neighborhood of $S$} and $N_{G} [S]=N_{G} (S)\cup S$ as the \emph{closed neighborhood of $S$}. When the underlying graph is clear from context, we omit the subscript. We use  $G[S]$  and $G-S$ to denote the subgraphs of $G$ induced by the vertices of $S$ and $V(G)\setminus S$, respectively. A hole $C$ in a graph $G$ is called a \emph{dominating hole} if every vertex of $G-V(C)$ is adjacent to a vertex of $C$. For a positive integer $t$, let $[t]= \{1,2,\dots,t\}$. Throughout this paper, all the indices $i\in[t]$ follow modulo $t$ arithmetic. 

For any given state in the game of cops and robber, a set $S\subseteq V(G)$ is said to be \emph{cop-guarded} if the closed neighborhood of every vertex of $S$ contains a cop. By \emph{immediate capture}, we mean that at the end of any round, the robber lies in the closed neighborhood of a cop, so capture occurs either already or on the cops’ next turn. Note that at the end of any round, if $S$ is cop-guarded and the robber also lies in $S$, then it gets captured immediately.

\section{On $(P_k, claw)$-free graphs}
In this section, we obtain bounds on the cop number of $(P_k, claw)$-free graphs for $k\geq 5$. Before proceeding, we first state a useful observation. 
\begin{obs}\label{P4obs-copguardedpath}
Let $xyz$ be an induced $P_3$ in a claw-free graph $G$. Then the following hold.
\begin{enumerate}
    \item $N[y]\subseteq N[x]\cup N[z]$. 
    \item At any state in the game of cops and robber, if there are cops placed on $x$ and $z$, then $N[x]\cup N[y]\cup N[z]$ is cop-guarded.
\end{enumerate}
\end{obs}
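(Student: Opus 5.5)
Part 1 follows directly from claw-freeness. Let $w\in N[y]$. If $w\in\{x,y,z\}$, then $w\in N[x]\cup N[z]$ already, since $y$ is adjacent to both $x$ and $z$. So assume $w\notin\{x,y,z\}$, so that $w$ is a neighbour of $y$ distinct from $x$ and $z$. Suppose, for a contradiction, that $w\notin N[x]\cup N[z]$, i.e.\ $w$ is adjacent to neither $x$ nor $z$. Since $xyz$ is an induced $P_3$, $x$ and $z$ are nonadjacent. Hence $\{x,z,w\}$ is an independent set of three neighbours of $y$. Then $G[\{y,x,z,w\}]$ is an induced claw centred at $y$, contradicting that $G$ is claw-free. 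Therefore $N[y]\subseteq N[x]\cup N[z]$.

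For part 2, I read ``$N[x]\cup N[y]\cup N[z]$ is cop-guarded'' as follows: every vertex $v$ in that set has a cop in its closed neighbourhood $N[v]$. Let $S=N[x]\cup N[y]\cup N[z]$. By part 1, $S=N[x]\cup N[z]$. Take any $v\in S$. If $v\in N[x]$, then $x\in N[v]$, and $x$ holds a cop. If $v\in N[z]$, then $z\in N[v]$, and $z$ holds a cop. In either case $N[v]$ contains a cop, so $S$ is cop-guarded.

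The only step needing any care is the claw argument in part 1, specifically checking that the three leaves $x$, $z$, $w$ are distinct and pairwise nonadjacent. This is exactly where we use that $xyz$ is \emph{induced} ($xz\notin E(G)$) and that $w\neq x,z$.
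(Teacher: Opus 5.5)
Your proposal is correct and follows the paper's argument: a neighbour of $y$ outside $N[x]\cup N[z]$ would form an induced claw with $x,y,z$, and part 2 follows directly from part 1. You also spell out the part 2 deduction ($N[x]\cup N[y]\cup N[z]=N[x]\cup N[z]$, and every vertex of $N[x]$ or $N[z]$ sees the cop on $x$ or $z$), which the paper leaves implicit.
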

\begin{claimproof}
Suppose there exists a vertex $u\in N(y)\setminus (N[x]\cup N[z])$. Then $\{x,y,z,u\}$ induces a claw in $G$, a contradiction. Hence (a) holds. Note that (b) is a consequence of (a).
\end{claimproof}

Now we prove Theorem~\ref{P4thm-Pkclawmain}. Our proof uses the Gy\'{a}rf\'{a}s path argument, as in~\cite{Sivaraman2019}, but with a different cop-guarding strategy.

\begin{proof}[Proof of Theorem~\ref{P4thm-Pkclawmain}]
Let $G$ be a $(P_k,claw)$-free graph for some integer $k\geq 5$, and let $p= \lceil \frac{k-1}{2} \rceil$. We describe a winning strategy for $p$ cops in $G$. Let $C_1, C_2, \dots, C_p$ denote the $p$ cops and $R$ denote the robber. The cop player begins the game by placing all $p$ cops on an arbitrary vertex $v_0\in V(G)$. Note that the robber player must place $R$ on a vertex in $G-N[v_0]$. Otherwise, if no such vertex exists or $R$ is placed elsewhere, then $R$ gets captured immediately. Let $D_1$ be the component of $G-N[v_0]$ containing $R$. Then, since $G$ is connected, there is a vertex $v_1\in N(v_0)$ such that $v_1$ has a neighbor in $D_1$. We may assume that the robber player plays cleverly and avoids the capture of $R$ as long as possible. Also, $G$ supports the robber to play as far as possible by allowing the existence of vertices acting as escape positions of $R$.

In the second round, $C_1$ stays on $v_0$ and the rest of the cops $C_2,\dots, C_p$ move from $v_0$ to $v_1$. Then $N[v_0]\cup N[v_1]$ is cop-guarded and $R$ moves to a vertex (or stays put) in $D_1- N_{D_1}(v_1)$. Let $D_2$ be the component of $D_1- N_{D_1}(v_1)$ containing $R$ at the end of second round. Since $D_1$ is connected, there is a vertex $v_2\in N_{D_1}(v_1)$ such that $v_2$ has a neighbor in $D_2$. In the third round, $C_1$ stays on $v_0$ and all the remaining cops $C_2,C_3,\dots,C_p$ move from $v_1$ to $v_2$. Now since $v_2\in N_{D_1}(v_1)\subseteq V(G)\setminus N[v_0]$, $v_0v_1v_2$ is an induced $P_3$ in $G$ and hence by Observation~\ref{P4obs-copguardedpath}, $N[\{v_0, v_1, v_2\}]$ is cop-guarded. Then $R$ moves to a vertex (or stays put) in $D_2-N_{D_2}(v_2)$. Let $D_3$ be the component of $D_2-N_{D_2}(v_2)$ containing $R$ at the end of the third round. Again, since $D_2$ is connected, there is a vertex $v_3\in N_{D_2}(v_2)$ such that $v_3$ has a neighbor in $D_3$. In the next turn, $C_1$ and $C_2$ stay on $v_0$ and $v_2$, respectively, and the rest of the cops move from $v_2$ to $v_3$. Then $N[\{v_0, v_1, v_2, v_3\}]$ is cop-guarded and $R$ moves to a vertex (or stays put) in $D_3-N_{D_3}(v_3)$. Proceeding in this manner, we obtain a path $v_0v_1\dots v_{2p-2}$ such that one cop is placed on every even-indexed vertex of this path. We obtain a nested sequence of components $D_1\supseteq D_2\supseteq \dots \supseteq D_{2p-1}$ such that $R$ is contained in $D_{2p-1}$. Since for $j\geq 0$, $D_{j+1}$ is a component of $D_{j}- N_{D_{j}}[v_{j}]$ where $D_0=G$, we have $v_0v_1\dots v_{2p-2}$ is an induced path. So by using Observation~\ref{P4obs-copguardedpath} repeatedly, we have $N[\{v_0, v_1, \dots, v_{2p-2}\}]$ is cop-guarded. 
Also, since $v_{2p-2}$ has a neighbor in $D_{2p-2}$ and $D_{2p-2}$ is connected, there is a vertex $v_{2p-1}\in N_{D_{2p-2}}(v_{2p-2})$ such that it has a neighbor $w\in D_{2p-1}$. Then $\{v_0, v_1, \dots, v_{2p-2},v_{2p-1}, w \}$ induces a path on at least $k$ vertices in $G$, a contradiction. Hence $p$ cops are sufficient to capture a robber in $G$. This completes the proof of Theorem~\ref{P4thm-Pkclawmain}.
\end{proof}

Now to prove Theorem~\ref{P4thm-Pkclawcontainingk-1hole}, we need the following structural result.

\begin{lemma}\label{P4lem-pkclawstructure}
Let $G$ be a $(P_k, claw)$-free graph for $k\geq 5$. Then every hole of length at least $k-1$ is a dominating hole in $G$.
\end{lemma}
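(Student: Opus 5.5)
We argue by contradiction. Let $C=c_1c_2\dots c_tc_1$ be a hole of length $t\geq k-1$ in $G$. Since $G$ is $P_k$-free and $C-c_i$ is an induced path on $t-1$ vertices, we have $t-1\leq k-1$, so $t\in\{k-1,k\}$. Suppose some vertex of $G-V(C)$ has no neighbor on $C$. Since $G$ is connected, a shortest path from such a vertex to $V(C)$ gives adjacent vertices $u,w\notin V(C)$ with $N_C(u)\neq\emptyset$ and $N_C(w)=\emptyset$. We will show this configuration contains either a claw or an induced $P_k$.

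The first step is to pin down $N_C(u)$. Take any $c_i\in N_C(u)$. Then $c_{i-1},c_{i+1},w$ are pairwise nonadjacent: $c_{i-1}c_{i+1}\notin E(G)$ because $C$ is a hole of length at least $4$, and $w$ has no neighbor on $C$. So if $u$ were adjacent to neither $c_{i-1}$ nor $c_{i+1}$, then $\{u,c_{i-1},c_{i+1},w\}$ would induce a claw. Hence every neighbor of $u$ on $C$ has a $C$-neighbor that is also adjacent to $u$. The same claw argument with the three vertices $c_j$, $c_l$, $w$ shows that $u$ cannot have two nonadjacent neighbors $c_j,c_l$ on $C$: the triple $\{c_j,c_l,w\}$ is independent. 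Therefore $N_C(u)$ is a clique in $C$. As $t\geq 4$, this forces $N_C(u)=\{c_i,c_{i+1}\}$ for some $i$.

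The second step builds the long path. With $N_C(u)=\{c_i,c_{i+1}\}$, the path
\[
w\,u\,c_{i+1}\,c_{i+2}\,\dots\,c_{i-1}
\]
(going around $C$ from $c_{i+1}$ and omitting $c_i$) is induced. Indeed, $w$ has no neighbors on $C$, $u$ is adjacent only to $c_{i+1}$ among the remaining cycle vertices, and $C-c_i$ is an induced path. This path has $2+(t-1)=t+1\geq k$ vertices, contradicting that $G$ is $P_k$-free. Hence every vertex of $G-V(C)$ has a neighbor on $C$, i.e.\ $C$ is a dominating hole.

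The main point needing care is the first step, which rules out $|N_C(u)|=1$ and any pair of nonadjacent neighbors. It is exactly the claw-freeness with $w$ as the third leaf, and it has no length requirement beyond $t\geq 4$. The length hypothesis $t\geq k-1$ is used only in the final count, where it makes the path have at least $k$ vertices.
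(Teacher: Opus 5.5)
Your overall route matches the paper's. You take $u\in N(V(C))$ with a neighbor $w$ outside $N[V(C)]$, use claw-freeness to force $N_C(u)$ to be an edge $\{c_i,c_{i+1}\}$, and then exhibit the induced path $w\,u\,c_{i+1}\dots c_{i-1}$ on $t+1\geq k$ vertices. Your second claw argument is correct: two nonadjacent $C$-neighbors $c_j,c_l$ of $u$ give a claw centered at $u$ with leaves $c_j,c_l,w$. The final count is also correct.

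The first step, however, is wrong as written. If $u$ is adjacent to neither $c_{i-1}$ nor $c_{i+1}$, then $\{u,c_{i-1},c_{i+1},w\}$ contains only the edge $uw$. It induces $K_2\cup 2K_1$, not a claw, since no vertex of the set is adjacent to the other three. Your deduction ``$N_C(u)$ is a clique, hence $N_C(u)=\{c_i,c_{i+1}\}$'' depends on this step to exclude $N_C(u)=\{c_i\}$, so the argument has a hole there. Your closing remark that this step is ``claw-freeness with $w$ as the third leaf'' is likewise mistaken.

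The repair is immediate, and there are two ways to make it.
\begin{itemize}
\item Use the paper's claw, which is centered at $c_i$ rather than $u$. If $u$ misses both $c_{i-1}$ and $c_{i+1}$, then $\{c_i,c_{i-1},c_{i+1},u\}$ induces a claw with center $c_i$; $w$ plays no role.
\item Skip excluding the singleton case. If $N_C(u)=\{c_i\}$, then $w\,u\,c_i\,c_{i+1}\dots c_{i-2}$ (going around $C$ and omitting $c_{i-1}$) is an induced path on $t+1\geq k$ vertices, which gives the same contradiction without any claw.
\end{itemize}
With either fix, your proof is complete and essentially coincides with the paper's.
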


\begin{proof}
Let $C \defeq v_1v_2\dots v_tv_1$ be a hole of length $t\geq k-1$ in $G$. Since $G$ is $P_k$-free, we have $k-1\leq t\leq k$. Suppose, for the sake of contradiction, that $C$ is not a dominating hole in $G$, that is $V(G)\setminus N[V(C)]\neq \emptyset$. Now since $G$ is connected, there exists a vertex $v\in N(V(C))$ such that $v$ has a neighbor $x\in V(G)\setminus N[V(C)]$. Also, $vv_i\in E(G)$ for some $i\in[t]$. Since $C$ is of length at least $4$, $v_{i-1}$ is not adjacent to $v_{i+1}$. Then $vv_{i-1}\in E(G)$ or $vv_{i+1}\in E(G)$, otherwise $\{v, v_i, v_{i-1}, v_{i+1}\}$ induces a claw in $G$. By symmetry, we may assume that $ vv_{i+1}\in E(G)$. Assume that $v$ has a neighbor $v_j\in V(C)$ for some $j\in [t]$ such that $v_j$ is distinct from $v_i$ and $v_{i+1}$. Note that $v_j$ is not adjacent to at least one of the vertices $v_i$ or $v_{i+1}$. Then $\{v_j, v_i, v, x\}$ or $\{v_j, v_{i+1}, v, x\}$ induces a claw in $G$, a contradiction. This implies that $N(v)\cap V(C)= \{v_i,v_{i+1}\}$. Then $G[V(C)\cup \{v,x\}]$ contains an induced $P_k$ in $G$, a contradiction. So $C$ is a dominating hole in $G$.
\end{proof}

\begin{proof}[Proof of Theorem~\ref{P4thm-Pkclawcontainingk-1hole}]
Let $G$ be a $(P_k, claw)$-free graph and $C \defeq v_1v_2\dots v_tv_1$ be a hole in $G$ of length $t\geq k-1$ for some integer $k\geq 5$. Since $G$ is $P_k$-free, we have $k-1\leq t\leq k$. By Lemma~\ref{P4lem-pkclawstructure}, $C$ is a dominating hole in $G$. So we may partition the vertex set of $G$ as $V(G)= V(C)\cup \mathcal{N}$, where $\mathcal{N}$ denotes the neighborhood of $V(C)$. Note that for every $v\in \mathcal{N}$, if $vv_i\in E(G)$ for some $i\in[t]$, then since $v_{i-1}v_iv_{i+1}$ is an induced $P_3$ in $G$, by Observation~\ref{P4obs-copguardedpath}, $vv_{i-1}\in E(G)$ or $vv_{i+1}\in E(G)$. 

Now let $q= \lceil \frac{t}{3} \rceil$. We denote the $q$ cops by $C_1, C_2, \ldots, C_q$ and the robber by $R$. The cop player begins the game by placing the cop $C_j$ on the vertex $v_{3j-2}$ of $C$ for each $1\leq j \leq q$. Then there are at most two consecutive vertices in $C$ which are unoccupied by any cop. Let $S$ be the set of the vertices of $C$ occupied by the cops at the beginning of the game. Note that $V(C)\subseteq N[S]$ and every component of $C-S$ is an isolated vertex or an edge. To avoid immediate capture, the robber player must choose a vertex $z\in V(G)\setminus N[S]$ to place $R$ as its initial position. Then $z\in \mathcal{N}$ and $z$ has a neighbor in $C-S$.

\begin{claim}\label{P4clm-PkclawzhasnbrsinD}
For every component $\mathcal{D}$ of $C-S$, either $z$ has no neighbor in $\mathcal{D}$ or $z$ has exactly two neighbors in $\mathcal{D}$. Moreover, if $z$ has neighbors in two distinct components of $C-S$, then $R$ can be captured in three rounds.
\end{claim}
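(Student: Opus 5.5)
First I rule out the case where $z$ has exactly one neighbor in a component $\mathcal{D}$ of $C-S$. Since $z\notin N[S]$, every neighbor of $z$ on $C$ is a vertex of $C-S$. Each component of $C-S$ is a single vertex $v_i$ or an edge $v_iv_{i+1}$, and its neighbors on $C$ outside the component lie in $S$.

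Suppose $z$ is adjacent to some $v_i\in\mathcal{D}$. As observed before the claim, $z$ is then adjacent to $v_{i-1}$ or $v_{i+1}$, because $v_{i-1}v_iv_{i+1}$ is an induced $P_3$ and $G$ is claw-free. The vertex it picks up cannot be in $S$, since $z\notin N[S]$. Hence it lies in $\mathcal{D}$.

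This settles both types of component. If $\mathcal{D}$ is a single vertex, we get a contradiction, so $z$ has no neighbor in $\mathcal{D}$. If $\mathcal{D}=v_iv_{i+1}$, then $z$ is adjacent to both $v_i$ and $v_{i+1}$. This proves the first part.

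For the second part, suppose $z$ has neighbors in two distinct components. By the first part, these components are edges $v_av_{a+1}$ and $v_bv_{b+1}$, and $z$ is adjacent to all four endpoints. Since the components are distinct, each edge is flanked by vertices of $S$. Consequently $v_a$ is nonadjacent to $v_{b+1}$, and $v_{a+1}$ is nonadjacent to $v_b$.

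The plan is to move two cops next to $z$. I take the cop on $v_{a-1}\in S$ and the cop on $v_{b+2}\in S$. In round two they move to $v_a$ and $v_{b+1}$, which are both adjacent to $z$. The vertices $v_a,z,v_{b+1}$ form an induced $P_3$, so by Observation~\ref{P4obs-copguardedpath}(b) the set $N[z]$ is cop-guarded. Therefore, wherever $R$ moves within $N[z]$, it is captured immediately, and this already occurs within three rounds.

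The remaining issue is that while these two cops move, other vertices of $C$ may become unguarded, and this could let $R$ leave $N[z]$ and escape. I do not need to control such escapes, because $R$ moves only one step from $z$ and so stays inside $N[z]$, which is guarded.

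The main obstacle is the case where $v_{a-1}=v_{b+2}$ is a single shared cop, which happens when $t$ is small. In that case I would instead use the cops at $v_{a+2}$ and $v_{b-1}$, the $S$-vertices on the other side of each edge, moving them to $v_{a+1}$ and $v_b$. Since the two components are distinct, at least one of these choices uses two distinct cops. The remaining checks are that the two target vertices are nonadjacent, which holds because $C$ is a hole of length at least $4$ and the targets are not consecutive, and then Observation~\ref{P4obs-copguardedpath} applies to give the capture.
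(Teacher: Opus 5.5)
Your first part is correct and matches the paper's argument. Moving two cops onto nonadjacent neighbors of $z$ in $C$ and invoking Observation~\ref{P4obs-copguardedpath}(b) is also the paper's idea for the second part. The gap is at the step you flag as the main obstacle.

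You assert that, because the components are distinct, at least one of the pairs $\{v_{a-1},v_{b+2}\}$ and $\{v_{a+2},v_{b-1}\}$ consists of two different cop positions. This is false. Both pairs collapse exactly when each of the two arcs of $C$ between $\mathcal{D}_a$ and $\mathcal{D}_b$ is a single vertex of $S$. That happens when $t=6$ and $S=\{v_1,v_4\}$, which is precisely the initial placement for $t=6$.

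Concretely, with $a=2$ and $b=5$ you get $v_{a-1}=v_{b+2}=v_1$ and $v_{a+2}=v_{b-1}=v_4$. With $a=5$ and $b=2$ the same happens with the roles swapped. So both of your plans ask one cop to move to two places. The case is not vacuous: $C_6$ plus a vertex $z$ adjacent to exactly $v_2,v_3,v_5,v_6$ is claw-free and $P_6$-free, and the robber may legally start at $z$.

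The fix is to use the flanking cops on the \emph{same} side of each component, as the paper does. Write $\mathcal{D}_a=v_iv_{i+1}$ and $\mathcal{D}_b=v_jv_{j+1}$. The cops on $v_{i+2}$ and $v_{j+2}$, both in $S$, are distinct simply because $i\neq j$. Move them to $v_{i+1}$ and $v_{j+1}$.

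These targets are nonadjacent. Indeed, $v_{j+1}\neq v_i$ because the components are disjoint, and $v_{j+1}\neq v_{i+2}$ because $v_{i+2}\in S$. Hence $v_{i+1}zv_{j+1}$ is an induced $P_3$, and the rest of your argument goes through unchanged.
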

\begin{claimproof}
Let $\mathcal{D}$ be a component of $C-S$ containing a neighbor of $z$. We have $\mathcal{D}$ consists of an isolated vertex or an edge. For the sake of contradiction, assume that $z$ has exactly one neighbor in $\mathcal{D}$, say $v_i$ for some $i\in [t]$. Note that $v_{i-1} v_iv_{i+1}$ is an induced $P_3$ in $G$. Moreover, $v_{i-1}\in S$ or $v_{i+1}\in S$. Then by using Observation~\ref{P4obs-copguardedpath}, $z$ has a neighbor in $S$, a contradiction. So  $z$ has exactly two neighbors in $\mathcal{D}$. Let $z$ have neighbors in two distinct components of $C-S$, say $\mathcal{D}_a$ and $\mathcal{D}_b$. Then $z$ has exactly two neighbors in each of the components $\mathcal{D}_a$ and $\mathcal{D}_b$. For $t=4$, both $D_a$ and $D_b$ cannot exist simultaneously, and for $t=5$, one of these components consists of exactly one vertex. So we assume that $t\geq 6$. Let $\mathcal{D}_a \defeq \{v_i, v_{i+1}\}$ and $\mathcal{D}_b \defeq \{v_j, v_{j+1}\}$ for some $i, j\in[t]$ such that $i+2 < j$. Then since $\mathcal{D}_a$ and $\mathcal{D}_b$ are two distinct components of $C-S$, we have $v_{i+2}, v_{j+2}\in S$. Moreover, since $i\neq j$, we have two distinct cops, say $C_a$ and $C_b$ placed at vertices $v_{i+2}$ and $v_{j+2}$, respectively. If there exists a vertex $u\in N(z)\setminus (N[v_{i+1}]\cup N[v_{j+1}])$, then $\{v_{i+1}, v_{j+1}, z, u\}$ induces a claw in $G$, a contradiction. So we have $N[z]\subseteq N[v_{i+1}]\cup N[v_{j+1}]$. In the second round, $C_a$ moves from $v_{i+2}$ to $v_{i+1}$, $C_b$ moves from $v_{j+2}$ to $v_{j+1}$, and the rest of the cops stay put in their position. Now $N[z]$ is cop-guarded, and wherever $R$ moves, it gets captured immediately.
\end{claimproof}

By Claim~\ref{P4clm-PkclawzhasnbrsinD}, we may assume that $N_C(z)$ induces a component $\mathcal{D}$ of $C-S$ consisting of an edge; otherwise, $R$ is captured in at most three rounds. Let $\mathcal{D} \defeq v_iv_{i+1}$ for some $i\in[t]$. If $t=k$, then $(V(C)\setminus \{v_i\})\cup \{z\}$ induces a $P_k$ in $G$, a contradiction. Hence we may assume that $t=k-1$. Recall that all the $q$ cops and the robber $R$ are placed on the vertices of $S$ and $z$, respectively. We show a winning strategy for $q$ cops in the following three claims.

In the following two claims, we obtain a strategy for cops to capture the robber for the particular cases $4\leq t\leq 6$. In these cases, there are $2$ cops in the play, say $C_a$ and $C_b$, that are placed on the vertices $v_1$ and $v_4$, respectively. For $4\leq t\leq 5$, we have $v_2v_3\in E(C-S)$ is the only component in $C-S$ that consists of an edge. For $t=6$, we have $v_2v_3, v_5v_6\in E(C-S)$ are two components in $C-S$, each of which consists of an edge. So without loss of generality, we may assume that for $4\leq t \leq 6$, we have $\mathcal{D}\defeq v_2v_3$. In the second round, $C_a$ moves from $v_1$ to $v_2$, and $C_b$ stays put on $v_4$. Then $R$ moves to a vertex $z_1\in N(z)\setminus (N[v_2]\cup N[v_4])$; otherwise, $R$ gets captured immediately. Since $N_C(z) = \{v_2, v_3\}$, we have $z_1\in \mathcal{N}$. 

\begin{claim}\label{P4clm-pkclaw5k6}
If $4\leq t\leq 5$, then $R$ is captured in at most four rounds.
\end{claim}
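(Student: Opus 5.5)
The plan is to pin down $N_C(z_1)$ exactly, using only the local structure of $C$, claw-freeness and the fact that $z_1 \notin N[v_2]\cup N[v_4]$. Then I give a short explicit cop strategy from the position at the end of the second round: $C_a$ on $v_2$, $C_b$ on $v_4$, $R$ on $z_1$. Recall the fact noted at the start of the proof: if a vertex of $\mathcal{N}$ is adjacent to $v_i$, then it is also adjacent to $v_{i-1}$ or $v_{i+1}$.

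\emph{Case $t=4$.} Here $C=v_1v_2v_3v_4v_1$. Since $z_1\in\mathcal{N}$ and $z_1$ is adjacent to neither $v_2$ nor $v_4$, its only possible neighbours on $C$ are $v_1$ and $v_3$. But every neighbour of $v_1$ or of $v_3$ on $C$ lies in $\{v_2,v_4\}$. So by the fact above, $z_1$ has no neighbour on $C$, contradicting $z_1\in\mathcal{N}$. Hence the escape vertex $z_1$ does not exist, and $R$ is captured at the end of the second round.

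\emph{Case $t=5$.} The same argument shows that $z_1$ cannot be adjacent to $v_3$, since both $C$-neighbours of $v_3$, namely $v_2$ and $v_4$, are forbidden. The remaining candidates are $v_5$ and $v_1$, and each must come with a consecutive partner on $C$, so $N_C(z_1)=\{v_5,v_1\}$. In particular $z_1$ is not adjacent to $v_3$, and $z$ is not adjacent to $v_1$ because $N_C(z)=\{v_2,v_3\}$. Thus $v_1$ and $z$ are two non-adjacent neighbours of $z_1$. Claw-freeness, in the form of Observation~\ref{P4obs-copguardedpath}(a) applied to the induced path $v_1z_1z$, gives
\[
N[z_1]\subseteq N[v_1]\cup N[z].
\]

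\emph{Strategy for $t=5$.} In the third round, $C_a$ moves from $v_2$ to $v_1$ and $C_b$ moves from $v_4$ to $v_3$. Now $v_1$ is guarded directly, and every vertex of $N[z_1]\cap N[v_1]$ is guarded. The vertex $z$ itself is guarded, since $z$ is adjacent to $v_3$. So $R$ must move to, or stay on, a vertex $u\in N[z_1]$ with $u\notin N[v_1]\cup N[v_3]$. By the inclusion above, such a $u$ lies in $N(z)$. In the fourth round, $C_b$ moves from $v_3$ to $z$, which is possible since $v_3z\in E(G)$, and $C_a$ stays on $v_1$. Then $u$ is adjacent to $C_b$.

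The main obstacle is this last step, together with the bookkeeping of the round count. I must check that after the fourth move $R$ has no vertex outside $N[v_1]\cup N[z]$ to escape to. For this I would apply Observation~\ref{P4obs-copguardedpath}(a) once more to an induced $P_3$ centred at $u$. The natural candidates are $v_1z_1u$ when $u\neq z_1$, or $z\,u\,w$ for a suitable neighbour $w$ of $u$. If $N[u]$ is not fully guarded, I would adjust the fourth move instead: keep $C_b$ on $v_3$ and move $C_a$ to $z_1$, so that the cops sit on $z_1$ and $v_3$. I would then check whether $N[u]\subseteq N[z_1]\cup N[v_3]$ holds, using a claw centred at $u$. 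Either way the goal is immediate capture by the end of the fourth round.
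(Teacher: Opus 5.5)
Your treatment of $t=4$ is fine. For $t=5$, your computation $N_C(z_1)=\{v_1,v_5\}$ and the inclusion $N[z_1]\subseteq N[v_1]\cup N[z]$ are also correct. The trouble is that in round three no cop can reach $v_1$ while another reaches $z$, because $v_4$ is adjacent to neither. This forces you into a two-step plan, and that plan fails.

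After your third move the cops are on $v_1$ and $v_3$. Any escape vertex $u$ then lies in $\mathcal{N}\setminus(N[v_1]\cup N[v_3])$. By the consecutive-neighbour fact $u$ is not adjacent to $v_2$, so $N_C(u)=\{v_4,v_5\}$. But $v_4\notin N[v_1]\cup N[z]$, so after your fourth move (cops on $v_1$ and $z$) the robber simply steps from $u$ to $v_4$.

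This really happens. Take the $5$-cycle $v_1\cdots v_5$ together with $z\sim v_2,v_3$, $z_1\sim v_1,v_5,z$ and $u\sim v_4,v_5,z,z_1$. This graph is claw-free. It is $P_6$-free because it has $14$ edges and maximum degree $4$, so deleting any two vertices leaves at least $6$ edges. With the paper's placement and your moves, $R$ has exactly one safe vertex each turn, $z\to z_1\to u\to v_4$, and it escapes.

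Your fallback is not justified either. The path $v_1z_1u$ is centred at $z_1$, so it says nothing about $N[u]$. Since $u\not\sim v_3$, Observation~\ref{P4obs-copguardedpath} gives no inclusion $N[u]\subseteq N[z_1]\cup N[v_3]$. Moreover, a guard set up only at the end of round four means capture in round five, which already exceeds the bound in the claim.

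The missing idea is to use the other non-adjacent pair of neighbours of $z_1$, namely $z$ and $v_5$; they are non-adjacent because $N_C(z)=\{v_2,v_3\}$. Observation~\ref{P4obs-copguardedpath} applied to the induced path $z\,z_1\,v_5$ gives $N[z_1]\subseteq N[z]\cup N[v_5]$. Both endpoints are reachable in one move: $C_a$ goes $v_2\to z$ and $C_b$ goes $v_4\to v_5$. So $N[z_1]$ is cop-guarded at the end of round three and $R$ is caught in round four. This is exactly the paper's argument.
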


\begin{claimproof}
First, assume that $t=4$. Since $z_1\in \mathcal{N}$, $z_1$ has a neighbor in $C$, say $u$. Since $u\notin \{v_2, v_4\}$, we have $u\in \{v_1,v_3\}$ and hence $\{v_2, v_4, z_1, u\}$ induces a claw in $G$, a contradiction. So $t\neq 4$ and we may assume that $t=5$. Then since $z_1\notin N[v_2]\cup N[v_4]$, by using Observation~\ref{P4obs-copguardedpath}, we have $N_C(z_1) = \{v_1, v_5\}$. Note that $N[z_1]\subseteq N[z]\cup N[v_5]$; otherwise, if there is a vertex $u\in N(z_1)\setminus (N[z]\cup N[v_5])$, then $\{z_1, z, v_5, u\}$ induces a claw in $G$. In the third round, $C_a$ moves from $v_2$ to $z$ and $C_b$ moves from $v_4$ to $v_5$. Now $N[z_1]$ is cop-guarded, and wherever $R$ moves, it gets captured immediately.
\end{claimproof}

\begin{claim}
If $t=6$, then $R$ is captured in at most five rounds.   
\end{claim}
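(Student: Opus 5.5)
The plan is to continue the play from where the paper leaves it. The hole is $C=v_1\dots v_6v_1$, cop $C_a$ is on $v_2$, cop $C_b$ is on $v_4$, and $R$ stands on some $z_1\in N(z)\setminus(N[v_2]\cup N[v_4])$ with $z_1\in\mathcal{N}$. I would first pin down $N_C(z_1)$ using Observation~\ref{P4obs-copguardedpath} and claw-freeness, and then show that in the third round one cop can step onto a neighbour of $z_1$ while the other guards the rest of $N[z_1]$.

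\textbf{Step 1: locating $z_1$ on the hole.} By assumption $z_1\notin N[v_2]\cup N[v_4]$.
\begin{itemize}
\item Since $v_2v_3v_4$ is an induced $P_3$, part (a) of Observation~\ref{P4obs-copguardedpath} gives $z_1\notin N(v_3)$.
\item Every neighbour of $z_1$ on $C$ is accompanied by a neighbour at an adjacent position of $C$ (as noted in the proof). Hence $N_C(z_1)$ is a nonempty subset of $\{v_5,v_6,v_1\}$, and it contains $v_6$ whenever it contains $v_1$ or $v_5$.
\item $z_1$ cannot be adjacent to both $v_1$ and $v_5$. Otherwise $\{z_1,v_1,v_5,z\}$ induces a claw: $v_1v_5\notin E(G)$, $z$ is adjacent to neither $v_1$ nor $v_5$ because $N_C(z)=\{v_2,v_3\}$, and $zz_1\in E(G)$.
\end{itemize}
Therefore $N_C(z_1)=\{v_5,v_6\}$ or $N_C(z_1)=\{v_6,v_1\}$.

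\textbf{Step 2: third-round moves.} Each case uses the same structural fact: since $z$ is adjacent to neither $v_5$, $v_6$ nor $v_1$, claw-freeness at $z_1$ gives $N[z_1]\subseteq N[z]\cup N[w]$ for each $w\in N_C(z_1)$. (A vertex $u\in N(z_1)$ outside $N[z]\cup N[w]$ would make $\{z_1,z,w,u\}$ a claw.)
\begin{itemize}
\item If $N_C(z_1)=\{v_5,v_6\}$: in round three $C_a$ moves $v_2\to z$ (legal, as $zv_2\in E(G)$) and $C_b$ moves $v_4\to v_5$. Then $N[z_1]\subseteq N[z]\cup N[v_5]$ is cop-guarded. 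Indeed $z_1$ is already adjacent to both cops, so $R$ is captured in round four at the latest.
\item If $N_C(z_1)=\{v_6,v_1\}$: $C_a$ moves $v_2\to v_1$, a neighbour of $z_1$, and $C_b$ moves $v_4\to v_3$, a neighbour of $z$. If $R$ stays, it is adjacent to $C_a$. If it moves, it must go into $N(z_1)\setminus(N[v_1]\cup N[v_3])$. If the round ends with $R$ uncaught, in round four $C_b$ moves $v_3\to z$ and $C_a$ moves $v_1\to v_6$ or stays, and I would use $N[z_1]\subseteq N[z]\cup N[v_1]$ to finish within round five. In fact, a cop moving onto a neighbour of the robber already forces immediate capture.
\end{itemize}

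\textbf{Main obstacle.} The main difficulty is Step 1: ruling out the configuration where $N_C(z_1)$ contains both $v_1$ and $v_5$. This needs the specific claw $\{z_1,v_1,v_5,z\}$ and hence the fact that $z$ has no neighbours on $C$ other than $v_2,v_3$. Once $N_C(z_1)$ is known, the rest is checking that the chosen moves are legal and cop-guard $N[z_1]$. In every case this gives capture well within five rounds.
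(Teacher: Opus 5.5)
\textbf{There is a genuine gap in the case $N_C(z_1)=\{v_6,v_1\}$.} Your Step~1 and your treatment of the case $N_C(z_1)=\{v_5,v_6\}$ are correct and match the paper. Excluding $\{v_1,v_5\}\subseteq N(z_1)$ is not actually needed, because cops on $z$ and $v_5$ work whenever $z_1v_5\in E(G)$.

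After your third-round moves the robber is no longer on $z_1$. It is on some $z_2\in N(z_1)\setminus(N[v_1]\cup N[v_3])$. So the inclusion $N[z_1]\subseteq N[z]\cup N[v_1]$ is irrelevant; what must be cop-guarded after round four is $N[z_2]$. Your remark that ``a cop moving onto a neighbour of the robber already forces immediate capture'' is false. The robber simply steps to a vertex of its closed neighbourhood that no cop dominates, and capture is forced only when the robber's whole closed neighbourhood is cop-guarded.

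Your fixed round-four response (one cop to $z$, the other to $v_6$ or staying on $v_1$) does fail. Here is an instance:
\begin{itemize}
\item Let $H$ be the $6$-cycle $a_1a_2\dots a_6a_1$ together with a vertex $x$ adjacent to $a_1,a_3,a_5$, and let $G$ be the line graph of $H$.
\item $G$ is claw-free. It is also $P_7$-free, since an induced $P_m$ in a line graph comes from a path with $m$ edges in $H$, and $H$ has only $7$ vertices.
\item Set $v_i=a_ia_{i+1}$, $z=xa_3$, $z_1=xa_1$, $z_2=xa_5$. The robber can play $z\to z_1\to z_2$ exactly as in your analysis.
\item After your round-four move, the robber steps from $z_2$ to $v_4=a_4a_5$, which is adjacent to none of $z$, $v_6$, $v_1$. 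So it is not captured within five rounds.
\end{itemize}

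\textbf{The missing idea is a second case analysis on $z_2$.}
\begin{itemize}
\item Since $zv_1, z_2v_1\notin E(G)$, claw-freeness applied to $\{z_1,z_2,z,v_1\}$ forces $zz_2\in E(G)$.
\item $z_2\in\mathcal{N}$, and Observation~\ref{P4obs-copguardedpath} applied to $v_1v_2v_3$ gives $z_2v_2\notin E(G)$. Hence $z_2$ is adjacent to $v_4$ or $v_6$.
\item If $z_2v_4\in E(G)$, then $N[z_2]\subseteq N[z_1]\cup N[v_4]$, using $z_1v_4\notin E(G)$. The cops move $v_1\to z_1$ and $v_3\to v_4$.
\item Otherwise $z_2v_6\in E(G)$, so $N[z_2]\subseteq N[z]\cup N[v_6]$. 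The cops move $v_1\to v_6$ and $v_3\to z$.
\end{itemize}
Either way $N[z_2]$ is cop-guarded after round four, and capture follows by round five. This is the paper's argument. Your uniform move misses the subcase $N_C(z_2)=\{v_4,v_5\}$, which is exactly what happens in the example above. There the cop on $v_1$ must go to $z_1$, not toward $z$ or $v_6$.
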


\begin{proof}
Let $t=6$. Recall that at the end of the second round, the robber is at $z_1\in \mathcal{N}$, the two cops are at $v_2$ and $v_4$, and $z_1v_2, z_1v_4\notin E(G)$. Then by using Observation~\ref{P4obs-copguardedpath}, $z_1v_3\notin E(G)$. Assume that $z_1v_5\in E(G)$. Then $N[z_1]\subseteq N[z]\cup N[v_5]$; otherwise, if there is a vertex $u\in N(z_1)\setminus (N[z]\cup N[v_5])$, then $\{z_1, u, z, v_5\}$ induces a claw in $G$. In the third round, $C_a$ moves from $v_2$ to $z$, and $C_b$ moves from $v_4$ to $v_5$. Then $N[z_1]$ is cop-guarded, and wherever $R$ moves, it gets captured immediately. So we may assume that $z_1v_5\notin E(G)$. Then by using Observation~\ref{P4obs-copguardedpath}, we have $N_C(z_1) = \{v_1, v_6\}$. In the third round, $C_a$ moves from $v_2$ to $v_1$ and $C_b$ moves from $v_4$ to $v_3$. Then $R$ moves from $z_1$ to some vertex $z_2\in N(z_1)\setminus (N[v_1]\cup N[v_3])$; otherwise, $R$ gets captured immediately. Note that $z_2z\in E(G)$; otherwise, $\{z_1, z_2, z, v_1\}$ induces a claw in $G$. Since $N_C(z_1) = \{v_1, v_6\}$ and $z_2\notin N[v_1]$, we have $z_2\in \mathcal{N}$. Since $z_2v_1, z_2v_3\notin E(G)$, by using Observation~\ref{P4obs-copguardedpath}, we have $z_2v_2\notin E(G)$. Now if $z_2$ is not adjacent to $v_4$ and $v_6$, then by Observation~\ref{P4obs-copguardedpath}, $z_2v_5\notin E(G)$ implying that $z_2$ has no neighbor in $C$, a contradiction. So $z_2$ is adjacent to $v_4$ or $v_6$. 
Suppose that $z_2v_4\in E(G)$. Then $N[z_2]\subseteq N[z_1]\cup N[v_4]$; otherwise, if there is a vertex $u\in N(z_2)\setminus (N[z_1]\cup N[v_4])$, then $\{z_2, u, z_1, v_4\}$ induces a claw in $G$. In the fourth round, $C_a$ moves from $v_1$ to $z_1$ and $C_b$ moves from $v_3$ to $v_4$. Now $N[z_2]$ is cop-guarded, and wherever $R$ moves, it gets captured immediately.
Now we may assume that $z_2v_4\notin E(G)$. Then $z_2v_6\in E(G)$. Recall that $N_C(z)=\{v_2,v_3\}$ implying that $zv_6\notin E(G)$. Then $N[z_2]\subseteq N[z]\cup N[v_6]$; otherwise, if there is a vertex $u\in N(z_2)\setminus (N[z]\cup N[v_6])$, then $\{z_2, u, z, v_6\}$ induces a claw in $G$. Then in the fourth round, $C_a$ moves from $v_1$ to $v_6$ and $C_b$ moves from $v_3$ to $z$. Hence $N[z_2]$ is cop-guarded, and wherever $R$ moves, it gets captured immediately. So the cop player has a strategy to ensure the capture of $R$ using two cops in at most five rounds.
 \end{proof}

\begin{claim}\label{P4clm-pkclaw7k}
If $t\geq 7$, then $R$ is captured in at most four rounds.
\end{claim}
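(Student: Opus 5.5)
The plan is to follow the same two-phase pattern as Claims~\ref{P4clm-pkclaw5k6} and the case $t=6$. First, move the two cops flanking $\mathcal{D}$ onto $\mathcal{D}$, so that the robber is forced to a new vertex $z_1$. Second, use claw-freeness at $z_1$ to find two vertices whose closed neighbourhoods cover $N[z_1]$, and move cops onto them.

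Recall the setup: $t=k-1\geq 7$, so $q=\lceil t/3\rceil\geq 3$; $N_C(z)=\{v_i,v_{i+1}\}=V(\mathcal{D})$; and $v_{i-1},v_{i+2}\in S$ carry two distinct cops, say $C_a$ and $C_b$.

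\textbf{Round 2.} $C_a$ moves from $v_{i-1}$ to $v_i$ and $C_b$ moves from $v_{i+2}$ to $v_{i+1}$, while every other cop stays on its vertex of $S$. The vertices $v_{i-1},v_{i+2}$ are still dominated, now by $C_a,C_b$, so all of $V(C)$ remains cop-guarded. To avoid immediate capture, $R$ must move to some $z_1\in N(z)$ with $z_1\notin N[v_i]\cup N[v_{i+1}]$ and $z_1$ nonadjacent to every other occupied vertex. Then $z_1\notin V(C)$, so $z_1\in\mathcal{N}$ has a neighbour on $C$.

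Next I determine $N_C(z_1)$ using Observation~\ref{P4obs-copguardedpath}, exactly as in Claim~\ref{P4clm-PkclawzhasnbrsinD}. For any $v_j\in N_C(z_1)$, one of $v_{j-1},v_{j+1}$ is also a neighbour of $z_1$, since $z_1$ has no neighbour in the set of occupied vertices. Moreover, the occupied vertices together with $v_i,v_{i+1}$ leave only gaps of size at most two on $C$. Hence $N_C(z_1)$ consists of consecutive pairs lying inside such gaps, and each such pair is adjacent on $C$ to an occupied vertex. Fix one such pair and let $v_j$ be an endpoint of it adjacent to a cop position $v_{j\pm1}$.

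I would also rule out the degenerate configurations with the $P_k$-freeness of $G$. If $z_1$ had neighbours only far from $\mathcal{D}$ in a way producing a long induced path $z_1\,z\,v_{i+1}\,v_{i+2}\cdots$ (or the symmetric path through $v_i$), this path would have at least $k$ vertices, a contradiction. This step, together with the claw at $z$ on $\{z_1,v_i,\dots\}$, should also pin down $N_C(z_1)$ up to symmetry, in the style of the case $t=6$.

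\textbf{Round 3.} Since $zv_j\notin E(G)$ (because $N_C(z)=\{v_i,v_{i+1}\}$), claw-freeness at $z_1$ gives $N[z_1]\subseteq N[z]\cup N[v_j]$: otherwise some $u\in N(z_1)\setminus(N[z]\cup N[v_j])$ would make $\{z_1,z,v_j,u\}$ a claw. So in the third round one cop moves to $z$ and another moves to $v_j$. For the first, either $C_a$ or $C_b$ will do, as both are adjacent to $z$. For the second, take the cop currently on $v_{j\pm1}$. Then $N[z_1]$ is cop-guarded, and $R$ is captured by the fourth round.

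The main obstacle is ensuring that the cop sent to $v_j$ is distinct from the one sent to $z$. This fails only when the only cop adjacent to $v_j$ is $C_a$ or $C_b$, and both of $z_1$'s relevant neighbours sit next to $\mathcal{D}$, namely in the gaps beside $v_{i-1}$ and $v_{i+2}$. In that case the other of $C_a,C_b$ can go to $z$. The worry is a configuration where both would be needed. That happens only if $z_1$ is adjacent to both $v_{i-1}$ and $v_{i+2}$. Then $\{z_1,v_{i-1},v_{i+2},z\}$ would force edges that contradict $N_C(z)=\{v_i,v_{i+1}\}$ or claw-freeness, or else the induced path argument yields a $P_k$.

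I expect the careful case check on the position of $N_C(z_1)$ relative to $\mathcal{D}$ and the three residues of $t \bmod 3$ to be the fiddly part. For this I would first try to choose the labelling of $S$ so that the gap containing $\mathcal{D}$ is symmetric.
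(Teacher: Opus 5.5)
Your proposal is correct and follows the paper's argument. After $C_a,C_b$ step onto $v_i,v_{i+1}$, you take a consecutive pair $v_j,v_{j+1}\in N_C(z_1)$, move the cop on $v_{j\pm1}$ to $v_j$ and whichever of $C_a,C_b$ differs from it to $z$, and use $N[z_1]\subseteq N[z]\cup N[v_j]$.

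One small correction: $v_{i-1}$ and $v_{i+2}$ are vacated in round 2, so an unoccupied run on $C$ can have three vertices, not two. That still guarantees an endpoint of the pair next to a cop, so nothing breaks. Your $P_k$-freeness step and the ``both cops needed'' worry are unnecessary, since only one pair is used. In any case, $z_1$ adjacent to both $v_{i-1}$ and $v_{i+2}$ would give the claw $\{z_1,v_{i-1},v_{i+2},z\}$.
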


\begin{claimproof}
Suppose that $t\geq 7$. Recall that $z$ is adjacent to exactly two consecutive vertices of $C$ that are not occupied by cops. Let $N_C(z)=\{v_i,v_{i+1}\}$ for some $i\in [t]$. Then there are cops, say $C_a$ and $C_b$ placed on vertices $v_{i-1}$ and $v_{i+2}$ in $C$, respectively. In the second round, $C_a$ moves from $v_{i-1}$ to $v_{i}$, $C_b$ moves from $v_{i+2}$ to $v_{i+1}$, and the rest of the cops stay put in their positions. Let $S'= (S\setminus \{v_{i-1}, v_{i+2}\}) \cup \{v_{i}, v_{i+1}\}$ be the set of vertices occupied by the cops at the end of this turn. Now $R$ moves from $z$ to a vertex $z_1\in N(z)\setminus (N[S'])$; otherwise, $R$ gets captured immediately. Then, since $N_C(z)= \{v_i, v_{i+1}\}$, we have $z_1\in \mathcal{N}$. By Observation~\ref{P4obs-copguardedpath}, $z_1v_j, z_1v_{j+1}\in E(G)$ for some $j\in [t]$. Clearly, $v_j, v_{j+1}\notin S'$ and $j\notin \{i-1,i,i+1\}$. Note that at the end of the second round, since $t\geq 7$, at most three consecutive vertices in $C$ can exist that are not occupied by cops. So at least one of $v_{j-1}$ and $v_{j+2}$ is occupied by a cop. Without loss of generality, let $v_{j-1}\in S'$ and $C_p$ denote the cop placed on $v_{j-1}$. In the third round, $C_p$ moves from $v_{j-1}$ to $v_j$ and one of the cops $C_a$ or $C_b$ distinct from $C_p$ moves to $z$. Note that $N[z_1]\subseteq N[z]\cup N[v_j]$; otherwise, if there is a vertex $u\in N(z_1)\setminus (N[z]\cup N[v_j])$, then $\{z_1, z, u, v_j\}$ induces a claw in $G$. Then $N[z_1]$ is cop-guarded, and wherever $R$ moves, it gets captured immediately. 
\end{claimproof}

By Claims~\ref{P4clm-pkclaw5k6}-\ref{P4clm-pkclaw7k}, we conclude that the cop player has a strategy to ensure the capture of $R$ using $q$ cops in at most five rounds. This completes the proof of Theorem~\ref{P4thm-Pkclawcontainingk-1hole}.
\end{proof}

\section{On $(P_k, fork)$-free graphs}
In this section, we obtain bounds on the cop number of $(P_k, fork)$-free graphs for $k\geq 5$. Note that the class of $(P_k, fork)$-free graphs is a proper superclass of the class of $(P_k, claw)$-free graphs. Before proceeding, we state the following useful observation. 
\begin{obs}\label{P4obs-Pkforkobs}
Let $xyzw$ be an induced $P_4$ in a fork-free graph $G$. Then the following hold.
\begin{enumerate}
    \item $N[z]\subseteq N[x]\cup N[y]\cup N[w]$.
    \item At any state in the game of cops and robber, if there are cops placed on $x$, $y$, and $w$, then $N[x]\cup N[y]\cup N[z]\cup N[w]$ is cop-guarded.
\end{enumerate}
\end{obs}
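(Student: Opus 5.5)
The argument mirrors the proof of Observation~\ref{P4obs-copguardedpath}: I will prove part (a) directly from fork-freeness, and part (b) will then follow from (a) by a short union argument.

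For (a), let $xyzw$ be an induced $P_4$. Since $z$ is adjacent to $y$, we have $z\in N[y]$, so it suffices to show that every vertex of $N(z)$ lies in $N[x]\cup N[y]\cup N[w]$. Suppose instead that there is a vertex $u\in N(z)\setminus (N[x]\cup N[y]\cup N[w])$. By construction $u\neq x,y,w$ and $u$ is adjacent to none of $x,y,w$, while $uz\in E(G)$. Since $xyzw$ is induced, the only edges among $\{x,y,z,w\}$ are $xy$, $yz$ and $zw$. Hence the subgraph induced by $\{x,y,z,w,u\}$ has edge set $\{xy,yz,zw,zu\}$.

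This graph is a fork: $z$ is the center of degree $3$ with neighbors $y,w,u$, and the edge $zy$ is subdivided by continuing to $x$, which is adjacent only to $y$. In other words, it is a claw with center $z$ and leaves $y,w,u$, with the leg through $y$ extended to $x$. This contradicts the assumption that $G$ is fork-free, so (a) holds.

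For (b), suppose cops occupy $x$, $y$ and $w$. Every vertex of $N[x]$ is in the closed neighborhood of $x$, hence adjacent to or equal to a cop's position, and the same holds for $N[y]$ and $N[w]$. By (a), every vertex of $N[z]$ belongs to one of these three sets. So every vertex of $N[x]\cup N[y]\cup N[z]\cup N[w]$ has a cop in its closed neighborhood, which means this set is cop-guarded.

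The only point needing care is checking that the five vertices induce exactly a fork, with no extra edges. This is guaranteed by the choice of $u$ outside $N[x]\cup N[y]\cup N[w]$ together with the inducedness of the $P_4$.
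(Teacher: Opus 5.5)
Your proof is correct and is the paper's argument: a vertex $u\in N(z)\setminus(N[x]\cup N[y]\cup N[w])$ would make $\{x,y,z,w,u\}$ induce a fork, and (b) follows directly from (a). You also spell out two points the paper leaves implicit: why the five vertices induce exactly a fork, and that $z\in N[y]$.
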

\begin{proof}
If there exists a vertex $u\in N(z)\setminus (N[x]\cup N[y]\cup N[w])$, then $\{x, y, z, u, w\}$ induces a fork in $G$, a contradiction. So (a) holds. Note that (b) is a consequence of (a).
\end{proof}

Now we prove Theorem~\ref{P4thm-Pkforkmain} by obtaining a winning strategy for $\lceil \frac{2(k-1)}{3} \rceil$ cops. 

\begin{proof}[Proof of Theorem~\ref{P4thm-Pkforkmain}]
Let $G$ be a $(P_k,fork)$-free graph for some integer $k\geq 5$, and let  $p=\lceil\frac{2(k-1)}{3}\rceil$. We describe a winning strategy for $p$ cops in $G$. Let $C_1, C_2, \dots, C_p$ denote the $p$ cops and $R$  denote the robber. The cop player begins the game by placing all $p$ cops on an arbitrary vertex, say $v_0$ in $G$. To avoid immediate capture, the robber player chooses a vertex in $G-N[v_0]$ to place $R$. Let $D_1$ be the component of $G-N[v_0]$ containing $R$. Then, since $G$ is connected, there is a vertex $v_1\in N(v_0)$ such that $v_1$ has a neighbor in $D_1$. We assume that the robber player has a strategy to escape the capture indefinitely. Hence there exists a vertex in $G$ which acts as an escape position for $R$ after each cops turn. 

In the second round, $C_1$ stays put on $v_0$, and the rest of the cops $C_2,\dots, C_p$ move from $v_0$ to $v_1$. Then $N[v_0]\cup N[v_1]$ is cop-guarded and $R$ moves to a vertex (or stays put) in $D_1- N_{D_1}(v_1)$. Let $D_2$ be the component of $D_1- N_{D_1}(v_1)$ containing $R$ at the end of second round. Since $D_1$ is connected, there is a vertex $v_2\in N_{D_1}(v_1)$ such that $v_2$ has a neighbor in $D_2$. In the third round, $C_1$ stays put on $v_0$, $C_2$ stays put on $v_1$, and the rest of the cops $C_3, \dots, C_p$ move from $v_1$ to $v_2$. Now $N[v_0]\cup N[v_1]\cup N[v_2]$ is cop-guarded, and $R$ moves (or stays put) to a vertex in $D_2- N_{D_2}(v_2)$. Let $D_3$ be the component of $D_2- N_{D_2}(v_2)$ containing $R$ at the end of third round. Since $D_2$ is connected, there is a vertex $v_3\in N_{D_2}(v_2)$ such that $v_3$ has a neighbor in $D_3$. In the fourth round, $C_1$ stays put on $v_0$, $C_2$ stays put on $v_1$, and the rest of the cops $C_3, \dots, C_p$ move from $v_2$ to $v_3$. By Observation~\ref{P4obs-Pkforkobs}, we have $N[v_0]\cup N[v_1]\cup N[v_2]\cup N[v_3]$ is cop guarded, and $R$ moves (or stays put) to a vertex in $D_3-N_{D_3}(v_3)$. Let $D_4$ be the component of $D_3-N_{D_3}(v_3)$ containing $R$ at the end of the fourth round. Since $D_3$ is connected, there is a vertex $v_4\in N_{D_3}(v_3)$ such that it has a neighbor in $D_4$.

Proceeding in this manner, we obtain a path $v_0v_1v_2\dots v_{q-2}$ where $q= \lceil{\frac{3p}{2}}\rceil$. In this path, there is a cop placed at each vertex $v_j$ for $0\leq j\leq q-2$ and $j\mod 3 \neq 2$. Also, as in Theorem~\ref{P4thm-Pkclawmain}, we obtain a nested sequence of components $D_1\supseteq D_2\supseteq \dots D_{q-1}$ such that $R$ is contained in $D_{q-1}$ and $v_0v_1v_2\dots v_{q-2}$ is an induced path. So by using Observation~\ref{P4obs-Pkforkobs} repeatedly, the closed neighborhood of every vertex in this path is cop-guarded. Also, since $D_{q-2}$ is connected and $v_{q-2}$ has a neighbor in $D_{q-2}$, there is a vertex $v_{q-1}\in N_{D_{q-2}}(v_{q-2})$ such that $v_{q-1}$ has a neighbor $w\in D_{q-1}$. Then $\{v_0, v_1, \dots, v_{q-2}, v_{q-1}, w\}$ induces a path on at least $k$ vertices in $G$, a contradiction. Hence $p$ cops are sufficient to capture a robber in $G$. This completes the proof of Theorem~\ref{P4thm-Pkforkmain}.
\end{proof}

By Theorem~\ref{P4thm-Pkforkmain}, the cop number of any $(P_7, fork)$-free graph is at most $4$. We further improve this upper bound to $3$ by using the following result together with a result of Sivaraman given in~Theorem~\ref{P4thm-sivaraman}. 

\begin{lemma}\label{P4lem-forkfreestr}
Let $G$ be a $(P_7, fork)$-free graph containing a hole $C$ of length $l$, where $6\leq l\leq 7$. Then every vertex in $V(G)\setminus N[V(C)]$ has a neighbor that is adjacent to every vertex of $C$. 
\end{lemma}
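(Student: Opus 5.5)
Let $C = v_1v_2\dots v_l v_1$ with $l\in\{6,7\}$, and let $x\in V(G)\setminus N[V(C)]$. Since $G$ is connected, I first reduce to the case where $x$ is at distance exactly $2$ from $C$. I would argue this first, by showing that no vertex lies at distance $3$ from $C$. If $x$ has distance $d\ge 3$, take a shortest path $x\cdots u\,v$ with $v\in N(V(C))$ and $u$ at distance $2$. Pick a vertex $v_i\in N_C(v)$. Extending along $C$ from $v_i$ away from the other neighbours of $v$ gives an induced path of at least $7$ vertices. It contains $x'$ (the vertex before $u$), $u$, $v$, and at least $4$ consecutive vertices of $C$, and this contradicts $P_7$-freeness. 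The case analysis here is the same as in the next step, so I will treat both together. From now on fix $x$ with a neighbour $v\in \mathcal N$, where $\mathcal N=N(V(C))$. The goal is to find some neighbour of $x$ (presumably $v$ itself, possibly after choosing $v$ well) that is complete to $C$.

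\textbf{Key step: analysing $N_C(v)$.} Let $v\in N(x)\cap \mathcal N$. I would show that $v$ is complete to $C$ by excluding every proper nonempty pattern $N_C(v)\subsetneq V(C)$. Since $N_C(v)\ne V(C)$, there is an $i$ with $vv_i\in E(G)$ and $vv_{i+1}\notin E(G)$. This gives the induced path $x\,v\,v_i\,v_{i+1}$ in the first two arcs, and two things follow.

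First, consider the maximal run of non-neighbours of $v$ on $C$ starting at $v_{i+1}$, say $v_{i+1},\dots,v_{j-1}$, ending at the neighbour $v_j$. If the run has length at least $4$, then $x\,v\,v_i\,v_{i+1}\cdots v_{i+4}$ is an induced $P_7$. If it has length $1$, $2$ or $3$, I use the fork. Take $v_{i-1}$: if $vv_{i-1}\notin E(G)$, then $\{x,v,v_i,v_{i-1},v_{i+1}\}$ contains a claw centred at $v_i$ with leaves $v_{i-1},v_{i+1},v$, plus $x$ attached to $v$. That is an induced fork, a contradiction. Hence every neighbour $v_i$ of $v$ on $C$ that borders a non-neighbour must have its other $C$-neighbour also adjacent to $v$. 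So each maximal run of neighbours of $v$ has length at least $2$.

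Second, for a run of non-neighbours of length $1$, $2$ or $3$, bounded by neighbours $v_i$ and $v_j$, I would apply the fork in a second way. The centre is $v$, with $x$ as one leaf, the edge $v_iv_{i+1}$ as the subdivided branch, and a third leaf chosen to be a neighbour of $v$ on $C$ nonadjacent to $v_i$ and $v_{i+1}$. Such a neighbour exists unless $N_C(v)$ is small, for example a single run of length $2$ or $3$. In those small cases I would instead get a long induced path: $x\,v$ followed by the long complementary arc of $C$. For $l\ge 6$ this arc has at least $5$ vertices, which produces an induced $P_7$. The check that the third leaf and the arc really are induced, done over the few patterns allowed for $l=6,7$, is the routine part.

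\textbf{Main obstacle.} The delicate cases are the configurations where $v$ misses only one or two vertices of $C$, for instance $N_C(v)=V(C)\setminus\{v_{i+1}\}$. Here neither a long path nor the obvious fork is immediately available, because the candidate third leaves are adjacent to the branch vertices. For these cases I would use the fork $\{v_{i+1}, v_i, v, x, v_{i+3}\}$, with centre $v$, branch $v\,v_i\,v_{i+1}$, and leaves $x$ and $v_{i+3}$ or $v_{i+4}$, chosen so that the leaf is nonadjacent to $v_i$ and $v_{i+1}$. This is possible because $l\ge 6$ leaves room on the cycle. This is exactly where the hypothesis $l\ge 6$ is used. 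Once all proper patterns are excluded, $v$ is complete to $C$, which proves Lemma~\ref{P4lem-forkfreestr}.
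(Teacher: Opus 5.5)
Your key step is essentially the paper's argument, and it is sound. It uses the fork centred at $v_i$ to force $vv_{i-1}\in E(G)$. It then uses the fork centred at $v$ with branch $v\,v_i\,v_{i+1}$ and a third leaf on $C$ nonadjacent to $v_i,v_{i+1}$. Finally, when neither fork applies, it takes the induced path $x\,v\,v_i v_{i+1}\cdots v_{i-2}$ on $l+1\ge 7$ vertices.

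Two small slips should be corrected:
\begin{itemize}
\item The only pattern that survives both forks is $N_C(v)=\{v_{i-1},v_i\}$. A single run of three neighbours is not a small case, because the far end of the run is a valid third leaf. For $l=6$ your long path in that case would have only $6$ vertices.
\item In your ``main obstacle'' case $N_C(v)=V(C)\setminus\{v_{i+1}\}$, the ordinary second fork with leaf $v_{i+3}$ already applies, so there is no extra difficulty there.
\end{itemize}

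The genuine gap is your exclusion of vertices at distance $\ge 3$ from $C$. This step is indispensable, since such a vertex has no neighbour in $\mathcal{N}$ at all and the lemma would fail for it. Your argument extends from $v_i$ along $C$ ``away from the other neighbours of $v$'' to get an induced path $x'\,u\,v\,v_i\cdots$ containing four cycle vertices. That presupposes $v$ has a long run of non-neighbours on $C$. But $u\in\overline{\mathcal{N}}$ is adjacent to $v\in\mathcal{N}$, so your own key step, applied with $u$ in the role of $x$, shows $v$ is complete to $C$. In that situation no such induced path exists. Saying the case analysis is ``the same as in the next step'' does not rescue it, because that analysis ends with $v$ complete to $C$, not with a contradiction.

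The missing idea is to use completeness rather than a long path. Take $x'$ at distance exactly $3$, $u$ at distance $2$, and $v$ at distance $1$ on a shortest path, so $x'v\notin E(G)$ and $v$ is complete to $C$. Then $\{v_1,v_3,v,u,x'\}$ induces a fork centred at $v$, with leaves $v_1,v_3$ and branch $v\,u\,x'$. This is exactly how the paper proceeds. It first proves that every vertex of $\mathcal{N}$ with a neighbour in $\overline{\mathcal{N}}$ is complete to $C$, and only then rules out distance $\ge 3$ with this fork, which is the reverse of the order you proposed.
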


\begin{proof}
Let $C\defeq v_1v_2\dots v_lv_1$ for $6\leq l\leq 7$. We may partition $V(G)$ as $V(C)\cup \mathcal{N}\cup \overline{\mathcal{N}}$, where $\mathcal{N}= N(V(C))$ and $\overline{\mathcal{N}}=V(G)\setminus N[V(C)]$. Assume that $\overline{\mathcal{N}}$ is non-empty. Then, since $G$ is connected, there is a vertex $y\in \overline {\mathcal{N}}$ such that it has a neighbor $x\in \mathcal{N}$. We first show that $x$ is adjacent to every vertex of $C$. For the sake of contradiction, suppose $x$ has a non-neighbor in $C$. Since $x\in \mathcal{N}$, there exists a pair of consecutive vertices in $C$ such that one is adjacent to $x$ and the other is not adjacent to $x$. Without loss of generality, assume that $xv_1\notin E(G) $ and $xv_2\in E(G)$. Then $xv_3\in E(G)$; otherwise, $\{v_1, v_2, v_3, x, y\}$ induces a fork in $G$. If $x$ is adjacent to $v_i$ for any $i\in \{4,5, \dots, l-1\}$, then $\{v_1, v_2, v_i, x, y\}$ induces a fork in $G$, a contradiction. So $x$ is not adjacent to $v_4, v_5,\dots,$ and $v_{l-1}$. Also, $xv_l\notin E(G)$; otherwise, $\{v_1, v_l, v_{l-1}, x, y\}$ induces a fork in $G$. Clearly, $N_C(x)= \{v_2, v_3\}$. Then $\{y, x, v_3, v_4, v_5, \dots, v_l, v_1\}$ induces a $P_{l+1}$ in $G$, a contradiction. 
 
 Now it is sufficient to show that every vertex in $\overline{\mathcal{N}}$ has a  neighbor in $\mathcal{N}$. For the sake of contradiction, assume that $y\in \overline {\mathcal{N}}$ such that $y$ has no neighbor in $\mathcal{N}$. Since $G$ is connected, there exists a path between the vertices of $\mathcal{N}$ and $y$. Let $P\defeq z_0, z_1, z_2, \dots, z_t=y$ be the shortest path between $y$ to a vertex $z_0\in \mathcal{N}$ for some integer $t$, such that $z_1, z_2, \dots, z_t\in \overline{\mathcal{N}}$. Then $P$ is an induced path in $G$, and we must have $t\geq 2$. Since $z_1\in \overline{\mathcal{N}}$ and $z_0\in \mathcal{N}$ is a neighbor of $z_1$, we have $z_0$ is adjacent to every vertex of $C$. Then $\{v_1, v_3, z_0, z_1, z_2\}$ induces a fork in $G$, a contradiction. This completes the proof of the lemma.
\end{proof}

\begin{proof}[Proof of Theorem~\ref{P4thm-Pkforkwithlongholes}]
Let $G$ be $(P_7,fork)$-free graph.
If $G$ does not contain any hole of length at least $6$, then by Theorem~\ref{P4thm-sivaraman}, we have $c(G)\leq 3$ and we are done. So we assume that $G$ contains a hole $C\defeq v_1v_2\ldots v_l,v_1$ of length $l\geq 6$. Since $G$ is $P_7$-free, we have $6\leq l\leq 7$. We may partition $V(G)$ as $V(C)\cup \mathcal{N}\cup \overline{\mathcal{N}}$, where $\mathcal{N}= N(V(C))$ and $\overline{\mathcal{N}}=V(G)\setminus N[V(C)]$. Let $C_1, C_2,$ and $C_3$ denote the three cops and $R$ denote the robber. The cop player begins the game by placing the cops $C_1$, $C_2$, and $C_3$ on the vertices $v_1, v_3,$ and $v_5$, respectively. To avoid immediate capture, the robber player must place $R$ on some vertex $z\in V(G)\setminus (N[v_1]\cup N[v_3]\cup N[v_5])$. Then $z\in \mathcal{N}\cup \overline {\mathcal{N}}$.

\begin{claim}\label{P4clm-P7forkzinN}
If $z\in \mathcal{N}$, then $z$ is complete to $V(C)\setminus \{v_1, v_3, v_5\}$.
\end{claim}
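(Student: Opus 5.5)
The plan is a direct case analysis using only fork-freeness and the structure of the hole $C$. We are given $z\in\mathcal{N}$ with $z\notin N[v_1]\cup N[v_3]\cup N[v_5]$, so $z$ is not adjacent to $v_1,v_3,v_5$ and has at least one neighbor in $V(C)\setminus\{v_1,v_3,v_5\}$. This set is $\{v_2,v_4,v_6\}$ when $l=6$ and $\{v_2,v_4,v_6,v_7\}$ when $l=7$.

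The basic propagation step is as follows. Suppose $zv_j\in E(G)$ and the two cycle-neighbors of $v_j$ are $v_{j-1},v_{j+1}$, with $v_{j-1}$ (say) among $v_1,v_3,v_5$, so that $zv_{j-1}\notin E(G)$. Consider $\{v_{j-1},z,v_j,v_{j+1},v_{j+2}\}$, a claw with center $v_j$, leaves $v_{j-1}$ and $z$, and the edge $v_jv_{j+1}$ subdivided by $v_{j+2}$. The vertices $v_{j-1},v_j,v_{j+1},v_{j+2}$ induce a $P_4$ on the hole, since $l\geq 6$. If $z$ is adjacent to neither $v_{j+1}$ nor $v_{j+2}$, this set induces a fork, a contradiction. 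So whenever $z$ is nonadjacent to $v_{j+1}$ (for instance, because $v_{j+1}\in\{v_1,v_3,v_5\}$), we conclude $zv_{j+2}\in E(G)$. The symmetric statement holds in the other direction around $C$.

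Apply this step starting from any neighbor of $z$ in $V(C)\setminus\{v_1,v_3,v_5\}$.

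For $l=6$:
\begin{itemize}
\item From $v_2$, the paths $v_3v_4$ and $v_1v_6$ force $zv_4,zv_6\in E(G)$.
\item From $v_4$, the paths $v_3v_2$ and $v_5v_6$ give $v_2,v_6$.
\item From $v_6$, the paths $v_5v_4$ and $v_1v_2$ give $v_4,v_2$.
\end{itemize}

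For $l=7$ the set also contains $v_7$, which is adjacent on $C$ to $v_1$ and to $v_6$.
\begin{itemize}
\item From $v_2$, the paths $v_3v_4$ and $v_1v_7$ give $v_4,v_7$.
\item From $v_4$, the paths $v_3v_2$ and $v_5v_6$ give $v_2,v_6$.
\item From $v_6$, the path $v_5v_4$ gives $v_4$. For $v_7$, take center $v_6$ with leaves $v_5,z$ and path $v_7v_1$. Since $zv_1\notin E(G)$, this is a fork unless $zv_7\in E(G)$.
\item From $v_7$, take center $v_7$ with leaves $v_1,z$ and path $v_6v_5$. This forces $zv_6\in E(G)$.
\end{itemize}
Iterating these implications from any starting neighbor reaches every vertex of $V(C)\setminus\{v_1,v_3,v_5\}$, proving the claim.

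The main care point is verifying that each chosen 5-set really induces a fork. This requires checking the nonadjacencies among the four hole vertices, which are immediate because $l\geq 6$ makes them an induced $P_4$. It also requires checking that $z$ is nonadjacent to the leaf $v_{j-1}$ and to the two path vertices in the contradiction case. I expect no genuine obstacle beyond bookkeeping the $l=7$ case, where $v_6$ and $v_7$ are consecutive uncovered vertices. There one must pick the subdivided path to run through $v_1$ or $v_5$, where nonadjacency to $z$ is guaranteed.
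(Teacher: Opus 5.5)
There is a genuine gap in the $l=7$ case. Your bullet ``From $v_6$, the path $v_5v_4$ gives $v_4$'' uses the set $\{v_7,z,v_6,v_5,v_4\}$: center $v_6$, path $v_6v_5v_4$, so the leaves are $z$ and $v_7$. Your propagation step needs that leaf to be nonadjacent to $z$, but $v_7\notin\{v_1,v_3,v_5\}$. Worse, your very next bullet shows $zv_6\in E(G)\Rightarrow zv_7\in E(G)$, so this 5-set always contains the edge $zv_7$ and is never an induced fork. For the same reason you cannot go from $v_7$ to $v_2$, because the leaf there would be $v_6$. Hence your valid implications starting at $v_6$ or $v_7$ close up on $\{v_6,v_7\}$, and the configuration $N_C(z)=\{v_6,v_7\}$ is never excluded.

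This cannot be repaired within your stated plan of ``using only fork-freeness.'' Take $C_7$ and add a vertex $z$ adjacent to exactly $v_6$ and $v_7$. The only vertices of degree $3$ are $v_6$ and $v_7$, and each has an edge inside its neighborhood, so this graph is claw-free and hence fork-free. It satisfies every hypothesis except $P_7$-freeness, yet $z$ is not complete to $\{v_2,v_4,v_6,v_7\}$. The missing ingredient is $P_7$-freeness, which is exactly what the paper uses. If $zv_2,zv_4\notin E(G)$, then $z$ has a neighbor in $\{v_6,v_7\}$, and $zv_6v_5v_4v_3v_2v_1$ or $zv_7v_1v_2v_3v_4v_5$ is an induced $P_7$. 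Once this case is removed, $z$ has a neighbor in $\{v_2,v_4\}$. Your remaining fork arguments then finish the proof: from $v_2$ you reach $v_4$ and $v_7$, and from $v_4$ you reach $v_2$ and $v_6$. These are the same forks the paper uses, and your $l=6$ case is fine as written.
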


\begin{claimproof}
Let $z\in \mathcal{N}$ and $S= \{v_1, v_3, v_5\}$. First, assume that $l=6$. Without loss of generality, we may assume that $zv_2\in E(G)$ since $z\in \mathcal{N}$. Now for any $x\in \{v_4, v_6\}$, if $zx\notin E(G)$, then $\{z, v_1, v_2, v_3, x\}$ induces a fork in $G$, a contradiction. So we have $zv_2, zv_4, zv_6\in E(G)$ and we are done. Next, assume that $l=7$. Let $zv_2, zv_4\notin E(G)$. Then since $z\in \mathcal{N}$ and $z\notin N[v_1]\cup N[v_3]\cup N[v_5]$, we have $zv_6\in E(G)$ or $zv_7\in E(G)$. Then $G[V(C)\cup \{z\}]$ contains an induced $P_7$ in $G$, a contradiction. Next, assume that $z$ is adjacent to exactly one of $v_2$ or $v_4$. If $zv_2\in E(G)$ and $zv_4\notin E(G)$, then $\{v_1, v_2, v_3, v_4, z\}$ induces a fork in $G$; otherwise $\{v_2,v_3,v_4,v_5,z\}$ induces a fork in $G$. This is a contradiction. So we have $zv_2, zv_4\in E(G)$. Then $zv_6, zv_7\in E(G)$; otherwise, $\{v_1, v_2, v_3, v_7, z\}$ or $\{v_3, v_4, v_5, v_6, z\}$ induces a fork in $G$. Therefore, $z$ is complete to $V(C)\setminus S$.
\end{claimproof}

If $z\in \mathcal{N}$, then  by Claim~\ref{P4clm-P7forkzinN}, $z$ is complete to $V(C)\setminus \{v_1, v_3, v_5\}$. In the second round, $C_1$ moves from $v_1$ to $v_2$, $C_2$ moves from $v_3$ to $v_4$, and $C_3$ stays put on $v_5$. Now $v_2zv_4v_5$ is an induced $P_4$ and by using Observation~\ref{P4obs-Pkforkobs}, $N[z]$ is cop-guarded. Then wherever $R$ moves, it gets captured immediately. So we assume that $z\in \overline {\mathcal{N}}$. Then by Lemma~\ref{P4lem-forkfreestr}, $z$ has a neighbor $y\in \mathcal{N}$ such that $y$ is adjacent to every vertex of $C$. In the second round, $C_1$ moves from $v_1$ to $y$ and the other two cops stay put on their positions, that is, $v_3$ and $v_5$. Note that $N[z]\subseteq N[v_3]\cup N[v_5]\cup N[y]$; otherwise, if there is some vertex $x\in N(z)\setminus (N[v_3]\cup N[v_5]\cup N[y])$, then $\{x, z, y, v_3, v_5\}$ induces a fork in $G$. So $N[z]$ is cop-guarded, and wherever $R$ moves, it gets captured immediately. This completes the proof of Theorem~\ref{P4thm-Pkforkwithlongholes}.
\end{proof}

\section{Conclusion}
We conclude this paper with a brief observation. Let $A_t$ denote the graph obtained by attaching two pendant vertices with exactly one end vertex of an induced $P_t$. Then the cop-guarding technique used in Theorem~\ref{P4thm-Pkclawmain} and Theorem~\ref{P4thm-Pkforkmain} can be easily extended to $(P_k, A_t)$-free graphs, where $k\geq 5$, and $2\leq t< k-1$. By adapting this technique, we can get the cop number of every $(P_k, A_t)$-free graph at most $\lceil\frac{(t-1)(k-1)}{t}\rceil$. However, the resulting bound does not seem to be significant for higher values of $t$. Note that the bound obtained in Theorem~\ref{P4thm-Pkclawmain} is tight for $k=5$. So it would be interesting to determine whether this bound can be improved for $k>5$. Similarly, improving the bound obtained for $(P_k, fork)$-free graphs in Theorem~\ref{P4thm-Pkforkmain} remains an open question. 

	\section*{Declarations}
	
	\noindent{\bf Conflict of interest} The authors do not have any financial or non financial interests that are directly or indirectly related to the work submitted for publication.
	
	\noindent{\bf Data availability}
     No data was used for the research described in this paper.

	
	\noindent{\bf Acknowledgments}  The research of the third author is supported by  Anusandhan National Research Foundation (ANRF) under Core Research Grant Number CRG/2023/003749.
		
	\section*{Declaration of generative AI and AI-assisted technologies in the manuscript preparation process}
	During the preparation of this work, the authors did not use any generative AI or AI-assisted technologies in the preparation of the manuscript.

\end{document}